\documentclass{elsarticle}

\usepackage{tikz}
\usetikzlibrary{cd}

\usepackage{amsmath,amssymb,amsfonts,mathrsfs,mathtools, braket}
\usepackage{enumerate}
\newtheorem{theorem}{Theorem}[section]
\newtheorem{lemma}[theorem]{Lemma}
\newtheorem{corollary}[theorem]{Corollary}
\newtheorem{proposition}[theorem]{Proposition}

\newproof{proof}{Proof}
\newdefinition{definition}[theorem]{Definition}
\newdefinition{example}[theorem]{Example}
\newdefinition{remark}[theorem]{Remark}
\newdefinition{assumption}[theorem]{Assumption}

\numberwithin{equation}{section}


\allowdisplaybreaks

\begin{document}

\title{Phase retrieval for affine groups over prime fields}

\author{ David Bartusel, Hartmut F\"{u}hr, Vignon Oussa}

\address{D. Bartusel, H.~F{\"u}hr\\
Lehrstuhl f\"ur Geometrie und Analysis \\
RWTH Aachen University \\
 D-52056 Aachen\\
 Germany \\ email: david.bartusel@mathga.rwth-aachen.de, fuehr@mathga.rwth-aachen.de}
 
\address{V. Oussa\\ Department of Mathematics \\ Bridgewater State University \\ MA 02234 \\ USA \\ email: vignon.oussa@bridgew.edu}
\begin{abstract}
We study phase retrieval for group frames arising from permutation representations, focusing on the action of the affine group of a finite field. We investigate various versions of the phase retrieval problem, including conjugate phase retrieval, sign retrieval, and matrix recovery. Our main result establishes that the canonical irreducible representation of the affine group $\mathbb{Z}_p \rtimes \mathbb{Z}_p^\ast$ (with $p$ prime), acting on the vectors in $\mathbb{C}^{p}$ with zero-sum, has the strongest retrieval property, allowing to reconstruct matrices from scalar products with a group orbit consisting of rank-one projections. We explicitly characterize the generating vectors that ensure this property, provide a linear matrix recovery algorithm and explicit examples of vectors that allow matrix recovery. We also comment on more general permutation representations.   
\end{abstract}
\maketitle
\global\long\def\with{\,\middle|\,}

\hyphenation{an-iso-tropic}
\noindent \textbf{\small Keywords:}{\small {}  phase retrieval; conjugate phase retrieval; sign retrieval; matrix recovery; Pauli pairs; group frame; group Fourier transform}{\small \par}

\noindent \textbf{\small AMS Subject Classification:}{\small {} 42C15;
42A38; 94A12; 20B20}{\small \par}

\section{Introduction}
The phase retrieval problem for frames was introduced in \cite{MR2224902}, with motivation coming from applications such as speech recognition and imaging. Since its introduction to the field of mathematical signal processing, phase retrieval has developed into an autonomous area of mathematical research with rich connections to diverse mathematical fields such as complex analysis, optimization, numerical analysis, algebraic geometry, harmonic analysis and representation theory, as witnessed by the review papers \cite{MR4094471,MR4189292} and their extensive lists of references. 

The phase retrieval problem can be briefly described as follows: Given a system $\Psi = (\psi_i)_{i \in I} \subset \mathbb{C}^d$, we aim to reconstruct $f \in \mathbb{C}^d$, up to a scalar factor $\alpha \in \mathbb{T}$, from the family $\left( |\langle f, \psi_i \rangle| \right)_{i \in I}$. Here $\mathbb{T}$ denotes the multiplicative group of complex numbers with modulus one. Clearly, a necessary condition for the system $\Psi$ is that ${\rm span}(\Psi) = \mathbb{C}^d$, i.e. the linear coefficient map $f \mapsto (\langle f, \psi_i \rangle )_{i \in I}) \in \mathbb{C}^I$ is necessarily injective. This observation motivates the name ``phase retrieval'': Solving the phase retrieval problem for a system $\Psi$ spanning $\mathbb{C}^d$ amounts to reconstructing the phases of the expansion coefficients $(\langle f, \psi_i \rangle)_{i \in I}$ from their moduli (which is a nonlinear problem), and then inverting the coefficient map (which is a standard, linear procedure).

It is therefore clear that phase retrieval requires some redundancy of $\Psi$: A basis $\Psi$ does not admit phase retrieval, since each expansion coefficient with respect to a basis (in particular its phase) can be chosen independently of the remaining ones. The necessary degree of redundancy has been quantified fairly sharply: Generic systems $\Psi$ consisting of $\ge 4d-4$ vectors allow phase retrieval, and for certain dimensions $d$, phase retrieval for systems with less than $4d-4$ vectors is impossible \cite{MR3303679}.

A problem closely related to phase retrieval is \textit{matrix recovery} \cite{MR3032952,MR2549940}, asking whether any matrix $A \in \mathbb{C}^{d \times d}$ can be recovered from the family
\[
 \mathbb{C}^{d \times d} \ni A \mapsto (\langle A \psi_i, \psi_i \rangle)_{i \in I} \in \mathbb{C}^I
\] of scalar products. This injectivity property was discussed in \cite{MR2549940}  by the name of \textit{maximal span property}, picking up on earlier investigations of the problem under the name of 
\textit{quantum state tomography} in physics.

Any system $\Psi$ doing matrix recovery does phase retrieval. To see this, one only needs to realize that 
\[
 |\langle f, \psi \rangle|^2 = \langle (f \otimes f)(\psi), \psi \rangle~,
\] where $f \otimes f \in \mathbb{C}^{d \times d}$ is given by $(f \otimes f)(m,n) = f(m) \overline{f(n)}$. 
Hence if $\Psi$ does matrix recovery, $f \otimes f$ can be recovered from $(|\langle f, \psi_i \rangle|^2)_{i \in I}$, and $f$ can be recovered up to a phase factor from $f  \otimes f$. An interesting byproduct of this observation is that stability of linear inversion is well-understood and controllable, unlike the non-linear problem of phase recovery. However, there is also a price to pay in terms of redundancy: For dimension reasons, matrix recovery requires at least $d^2$ measurements.

Phase retrieval and matrix recovery are often studied under additional assumptions on the analyzed signal or matrix, such as sparsity or low rank; see e.g. \cite{MR3175089,MR3574562}. Here one does not aim for recovery guarantees for all possible signals, thereby allowing a reduction in the number of measurements. The constructions in this context, as well as the recovery guarantees, are typically of a probabilistic nature.

\subsection*{Overview of the paper}

In this paper we are interested in the deterministic construction of structured systems guaranteeing phase retrieval and matrix recovery, more precisely in the construction of group frames with phase retrieval. Group frames are characterized by the fact that they are generated as orbits $ \Psi = \pi(G) \psi$ of a (usually unitary) representation $\pi$ of a group $G$ acting on a suitably chosen vector $\psi$; see \cite{MR2964010} for group frames over finite groups, and \cite{MR2130226} for an extensive discussion of group frames over locally compact groups.

The chief example of group frames allowing phase retrieval is provided by the case where $G = \mathbb{H}$ is a Heisenberg group, and $\pi$ is a Schr\"odinger representation. This setting provided the earliest results concerning the use of group actions for quantum state tomography \cite{MR115648}. In the setting of Schr\"odinger representations, the phase retrieval properties of the system $\pi(G) \psi$ were seen to be closely related to properties of the \textit{ambiguity function} $A_\psi : \mathbb{H} \ni x \mapsto  \langle \psi, \pi(x) \psi \rangle$ (see \cite{MR4094471}). It was then  observed in \cite{MR3500231} (or rather, rediscovered, in the light of \cite{MR115648}) that for finite Heisenberg groups, proper assumptions on $A_\psi$ even allow to solve the \textit{matrix recovery problem}. A further investigation of this property for more general group frames was then provided in
\cite{MR3901918,MR3907853}, with emphasis on irreducible projective representations of finite abelian groups. A quite far-reaching extension of these results to irreducible representations of general nilpotent groups was recently obtained in \cite{FuehrOussaNilpotent}. These developments notwithstanding, the problem of phase retrieval for group frames is far from being fully understood. 

In this paper we focus on permutation representations, and in particular on actions of affine groups $\mathbb{Z}_p \rtimes \mathbb{Z}_p^*$ of finite fields $\mathbb{Z}_p$ (with $p$ prime). Our main results, Theorem \ref{thm:crit_op_ret}, Corollary \ref{cor:crit_op_ret} and Theorem \ref{thm:ex_or}, establish criteria for generating vectors guaranteeing matrix recovery, provide an explicit recovery formula, and simple concrete examples of generating vectors fulfilling these criteria. We also present a general recipe how to tackle the problem of matrix recovery for group frames (see Remark \ref{rem:Group_fourier}), and explain how the known instances (finite Heisenberg groups and affine groups) fit in this general framework (Remark \ref{rem:mr_Heisenberg}). Remarks \ref{rem:Pauli_pairs} and \ref{rem:pt_fourier} exhibit some connections of our results to intriguing questions in finite Fourier analysis. 

To put these results in perspective, we first point out that the retrieval problem for finite affine groups can be understood as a finite analog of the wavelet phase retrieval problem previously studied by Mallat and Waldspurger \cite{MR3421917}. At this point in time it is safe to say that explicit constructions of systems for matrix recovery, with linear reconstruction formulae, are only known for very specific systems of vectors, such as equiangular tight frames, or unions of $d+1$ mutually unbiased bases in $d$-dimensional Hilbert spaces, see \cite{MR1003014,MR2549940}. It is considered highly doubtful that every dimension $d$ accommodates $d+1$ mutually unbiased bases. Known explicit constructions of such systems exist only in very restricted dimensions, e.g. for $d$ equal to a prime power. By contrast to this, our representation theoretic construction of a system doing matrix recovery via the affine group of the field $\mathbb{Z}_p$ ($p$ prime) works in a space of dimension $d=p-1$, yielding a system of $d(d+1)$ vectors. Hence, while our argument does not rely on the notion of mutually unbiased bases, it is intriguing to note that the cardinalities of the resulting group frames obey the same dimension-dependent relationship. 

Throughout the paper, we also discuss variants of phase retrieval, such as sign and conjugate phase retrieval, and extensions to more general permutation representations. We present several results and examples of independent interest, mostly for the purpose of highlighting the challenges that the representation-theoretic understanding of phase retrieval properties faces. 

\section{Phase retrieval for group frames}

Throughout the paper, we let $\mathbb{K} \in \{ \mathbb{R},\mathbb{C} \}$. Given finite sets $X,Y$, we let $\mathbb{K}^{ X \times Y}$ denote the space of matrices $A = (A(x,y))_{x \in X, y \in Y}$, with entries in $\mathbb{K}$. We identify matrices with the induced linear maps $\mathbb{K}^Y \to \mathbb{K}^X$ if needed. We endow the matrix space $\mathbb{K}^{X \times Y}$ with the usual scalar product
\[
 \langle A, B \rangle = {\rm trace}(AB^*)~.
\]
We will use the convention $\mathbb{K}^d = \mathbb{K}^{\{0,\ldots,d-1 \}}$ for $d \in \mathbb{N}$. Special vectors in $\mathbb{K}^d$ are given by $\mathbf{1} = (1,\ldots,1)^T$ and $\delta_k \in \mathbb{K}^d$, $\delta_k(m) = 1$ for $k=m$ and $0$ elsewhere.

We now formally introduce a variety of retrieval properties. Sign and phase retrieval for frames were introduced by Balan et.al. \cite{MR2224902}. The intermediate notion of conjugate phase retrieval is due to Evans and Lai \cite{MR4029744}. Matrix recovery and associated linear reconstruction methods were introduced to this discussion in \cite{MR2549940}; see also \cite{MR3032952}.

\begin{definition} 
Let $\mathcal{H}$ denote a finite-dimensional real or complex Hilbert space. 
Given a system $\Psi = (\psi_i)_{i \in I} \subset \mathcal{H}$, we define the associated coefficient transform as
\[
 V_\Psi : \mathcal{H} \to \mathbb{C}^I ~,~ f \mapsto (\langle f, \psi_i \rangle)_{i \in I}~.
\] Recall that $V_\Psi$ is injective iff $(\psi_i)_{i \in I}$ is total.  
Given a complex Hilbert space $\mathcal{H}$, we say that the system $\Psi \subset \mathcal{H}$ 
\textbf{does phase retrieval} if the map 
\[
 \mathcal{A}_\Psi :\mathcal{H}/ \mathbb{T} \ni f \mapsto |V_\Psi f| \in \mathbb{R}^I
\] is one-to-one. Here $\mathcal{H}/\mathbb{T}$ denotes the orbit space under the canonical action of $\mathbb{T}$ on $\mathcal{H}$, and $|V_\Psi f|$ is obtained by pointwise application of the modulus function to $V_\Psi f$.

Given a real Hilbert space $\mathcal{H}$, and $\Psi = (\psi_i)_{i \in I} \subset \mathcal{H}$, we define the \textbf{sign retrieval property} by requiring that 
\[ 
\mathcal{A}_\Psi : \mathcal{H} / \{ \pm 1\} \to   \mathbb{R}^I~,~ f \mapsto |V_\Psi f| \]
is one-to-one. 

We  define the intermediate property of \textbf{conjugate phase retrieval} as follows: We assume that $\mathcal{H} \subset \mathbb{C}^X$ is a complex subspace, where $X$ is finite, and $\mathbb{C}^X$ is endowed with the usual scalar product. Furthermore, we assume $\Psi \subset \mathbb{R}^X \cap \mathcal{H}$. Then $\Psi$ \textbf{does conjugate phase retrieval} if the equation $\mathcal{A}_\Psi(f) = \mathcal{A}_\Psi(g)$ is equivalent to the existence of $\alpha \in \mathbb{T}$ such that either $f = \alpha g$ or $f = \alpha \overline{g}$ holds, where complex conjugation is understood to be applied entrywise.

Finally, we say that the system $\Psi \subset \mathcal{H}$ \textbf{does matrix recovery} if the map
\[
 \mathcal{L}(\mathcal{H}) \ni A \mapsto V_{\Psi \otimes \Psi} A \in \mathbb{C}^I,~V_{\Psi \otimes \Psi} A(i) = \langle A \psi_i, \psi_i \rangle
\] is one-to-one. Here $\mathcal{L}(\mathcal{H})$ denotes the space of linear mappings $\mathcal{H} \to \mathcal{H}$. 
\end{definition}
%

\begin{remark} \label{rem:gen_retrieval}
 Consider the following statements, for a system $\Psi = (\psi_i)_{i \in I} \subset \mathbb{C}^X$:
 \begin{enumerate}
  \item[1.)] $\Psi$ does matrix recovery.
  \item[2.)] $\Psi$ does phase retrieval.
  \item[3.)] $\Psi$ does conjugate phase retrieval.
  \item[4.)] $\Psi$ does sign retrieval.
  \item[5.)] $\Psi$ spans $\mathbb{C}^X$.
 \end{enumerate}
Then one has the implications $1.) \Rightarrow 2.) \Rightarrow 5.)$, as well as $3.) \Rightarrow 4.) \Rightarrow 5.)$. Observe that conditions 3.), 4.) refer to real-valued systems, which never do phase retrieval,  due to the equation $\overline{\langle f, \psi \rangle} = \langle \overline{f}, \psi \rangle$ holding for real-valued $\psi$.  

%
%

\end{remark}

\begin{definition} \label{defn:pr_rep}
 Let $\pi$ denote a representation of a finite group $G$ on the finite-dimensional Hilbert space $\mathcal{H}_\pi$ over $\mathbb{K}$. Given $\psi \in \mathcal{H}_\pi$, the associated group frame is given by $\Psi = \pi(G) \psi = (\pi(x) \psi)_{x \in G}$. We call $\psi$ the \textit{generating vector} of the system $\Psi$. 
 
 Given $\psi$, we write 
 \[
  V_\psi : \mathcal{H}_\pi \to \mathbb{C}^G~,~ V_\psi f(x) = \langle f, \pi(x) \psi \rangle~. 
 \] In case we want to emphasize the dependence on the representation $\pi$, we will also use $V_\psi^\pi = V_\psi$. 

  We say that $\pi$ \textbf{does phase/conjugate phase/sign retrieval} if there exists $\psi \in \mathcal{H}$ such that $\Psi = (\pi(x) \psi)_{x \in G}$ does phase/conjugate phase/sign retrieval. 
 
In the case of conjugate phase retrieval, we additionally assume that $\mathcal{H}_\pi = \mathbb{C}^X$ for suitable $X$, and that $\pi(G) (\mathbb{R}^X) \subset \mathbb{R}^X$ holds. 
 
 Similarly, $\pi$ \textbf{does matrix recovery} if there exists $\psi \in \mathcal{H}$ such that $\Psi = (\pi(x) \psi)_{x \in G}$ does matrix recovery. 
\end{definition}

The following observation reformulates matrix recovery for group frames. Recall that a vector $\psi \in \mathcal{H}_\pi$ is called \textit{cyclic} for $\pi$ if $V_\psi^\pi$ is one-to-one. With this definition in mind, the proposition is seen to be a mere reformulation.  
\begin{proposition} \label{prop:mr_conjrep}
Let $\pi$ denote a representation of a group $G$ acting on $\mathbb{C}^X$. 

Let by $\varrho = (\pi \otimes \overline{\pi})|_{\Delta_G}$, i.e. the representation of $G$ acting on $\mathbb{C}^{X \times X}$ by
\[
 \varrho(x) A = \pi(x) \cdot A \cdot \pi(x)^*~.
\]
Then $\psi \in \mathcal{H}$ does matrix recovery iff $\psi \otimes \psi$ is a cyclic vector for $\varrho$. 
\end{proposition}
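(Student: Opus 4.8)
The plan is to unwind the two definitions involved and observe that they assert the injectivity of the very same linear map, just viewed from two angles. On the one hand, $\psi$ does matrix recovery precisely when the map
\[
 \mathbb{C}^{X \times X} \ni A \mapsto \big( \langle A \pi(x) \psi, \pi(x) \psi \rangle \big)_{x \in G} \in \mathbb{C}^G
\]
is injective. On the other hand, $\psi \otimes \psi$ is cyclic for $\varrho$ exactly when
\[
 V_{\psi \otimes \psi}^\varrho : \mathbb{C}^{X \times X} \ni A \mapsto \big( \langle A, \varrho(x)(\psi \otimes \psi) \rangle \big)_{x \in G} \in \mathbb{C}^G
\]
is injective. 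So the whole proof reduces to checking that these two maps coincide, entry by entry.

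The key computation is therefore the identity, for each fixed $x \in G$,
\[
 \langle A, \varrho(x)(\psi \otimes \psi) \rangle = \langle A \pi(x)\psi, \pi(x)\psi \rangle~.
\]
I would prove this by expanding: $\varrho(x)(\psi \otimes \psi) = \pi(x)(\psi \otimes \psi)\pi(x)^* = (\pi(x)\psi) \otimes (\pi(x)\psi)$, using the elementary fact that for any operator $T$ and vector $v$ one has $T(v \otimes v)T^* = (Tv) \otimes (Tv)$ (which in turn follows from $(v \otimes w) = v w^*$ in matrix notation, whence $T(vw^*)T^* = (Tv)(Tw)^*$). Then, writing $\varphi = \pi(x)\psi$, it remains to note $\langle A, \varphi \otimes \varphi \rangle = \mathrm{trace}(A(\varphi\varphi^*)^*) = \mathrm{trace}(A \varphi \varphi^*) = \varphi^* A \varphi = \langle A\varphi, \varphi\rangle$, which is the second equality already recorded in the Definition of matrix recovery. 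Hence the two coefficient maps agree on every $A$, and injectivity of one is equivalent to injectivity of the other; the latter is by definition the statement that $\psi \otimes \psi$ is cyclic for $\varrho$.

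There is genuinely no obstacle here: the proposition is, as the surrounding text says, "a mere reformulation," and the only thing to be careful about is bookkeeping with the rank-one operator $\psi \otimes \psi$ versus its associated matrix, and matching the Hermitian-inner-product conventions (the paper uses $\langle A, B \rangle = \mathrm{trace}(AB^*)$, and $\pi$ is unitary so $\pi(x)^* = \pi(x)^{-1}$, though unitarity is not even needed for the identity above). I would state the one-line operator identity as a displayed equation, cite the definition of cyclicity and the already-given reformulation $\langle A\pi(x)\psi, \pi(x)\psi\rangle = \langle A, \pi(x)\psi \otimes \pi(x)\psi\rangle$ from the Definition, and conclude. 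If desired, one can also phrase the argument more abstractly: $\varrho$ is (a restriction of) $\pi \otimes \overline{\pi}$ to the diagonal, and under the canonical isometry $\mathbb{C}^{X \times X} \cong \mathbb{C}^X \otimes \overline{\mathbb{C}^X}$ the vector $\psi \otimes \psi$ corresponds to the rank-one matrix, so cyclicity of $\psi \otimes \psi$ for $\varrho$ literally says that the orbit $(\varrho(x)(\psi \otimes \psi))_{x \in G}$ is total, which by self-duality of the inner product is the injectivity of the matrix-recovery map.
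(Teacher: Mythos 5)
Your proposal is correct and matches the paper's intent: the paper gives no separate argument, noting only that the proposition ``is seen to be a mere reformulation'' of the definitions of matrix recovery and cyclicity, which is exactly what your unwinding of $\varrho(x)(\psi\otimes\psi)=(\pi(x)\psi)\otimes(\pi(x)\psi)$ and the trace identity makes explicit. Your write-up simply supplies the routine bookkeeping the paper leaves to the reader.
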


We next prove a basic feature of the sets of vectors $\psi$ that guarantee one of the retrieval properties.
In the following lemma, a set $U \subset \mathbb{K}^d$ is called \textit{real Zariski open} (by slight abuse of terminology) if there exist finitely many functions $f_1,\ldots,f_k$ on $\mathbb{K}^d$ that are polynomials in the real and imaginary parts of the coordinates such that 
\[
 U = \{ x \in \mathbb{K}^d : \exists \text{ }    1 \le i \le k~:~ f_i(x) \not= 0 \}~.
\] The terminology naturally extends to subset of $\mathbb{K}^{d\times d}$.

\begin{lemma} \label{lem:Zariski}
 Assume that $\pi$ does sign/conjugate phase/phase/matrix recovery. Then the set of vectors $\psi \in \mathcal{H}_\pi$ that guarantee this property is real Zariski open.
\end{lemma}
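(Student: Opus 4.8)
The plan is to treat the four retrieval properties separately, exhibiting in each case the complement of the set of admissible generating vectors as the common zero set of finitely many polynomials in the real and imaginary parts of the entries of $\psi$.

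The matrix recovery case is the only linear one and is settled directly. By Proposition~\ref{prop:mr_conjrep}, $\psi$ does matrix recovery iff $\psi\otimes\psi$ is cyclic for $\varrho$, i.e.\ iff the linear map $A\mapsto(\langle A,\pi(x)\psi\otimes\pi(x)\psi\rangle)_{x\in G}$ from $\mathbb{C}^{X\times X}$ to $\mathbb{C}^G$ is injective. In the standard bases this map has a matrix whose entries $\langle\delta_m\otimes\delta_n,\pi(x)\psi\otimes\pi(x)\psi\rangle$ are polynomials of degree two in $\mathrm{Re}\,\psi,\mathrm{Im}\,\psi$; injectivity holds iff one of the $d^2\times d^2$ minors of this matrix (again polynomials in $\mathrm{Re}\,\psi,\mathrm{Im}\,\psi$) is nonzero, so the set of admissible $\psi$ is Zariski open by definition. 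For sign retrieval (where $\mathcal{H}_\pi=\mathbb{R}^X$) I would use the complement property characterization of real phase retrieval \cite{MR2224902}: $\pi(G)\psi$ does sign retrieval iff for every $S\subseteq G$ at least one of $(\pi(x)\psi)_{x\in S}$ and $(\pi(x)\psi)_{x\in S^c}$ spans $\mathbb{R}^d$. For fixed $S$, spanning means that one of the $d\times d$ minors of $[\pi(x)\psi]_{x\in S}$ (a polynomial in $\psi$) is nonzero, hence defines a Zariski-open set; since the Zariski-open sets of the lemma are closed under finite unions (concatenate the lists of defining polynomials) and finite intersections (pass to pairwise products), the set of admissible $\psi$, being a finite intersection over the subsets $S\subseteq G$ of unions of two such sets, is Zariski open.

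For phase retrieval I would reformulate first. From $|\langle f,\pi(x)\psi\rangle|^2=\langle f\otimes f,\pi(x)\psi\otimes\pi(x)\psi\rangle$, the equality $|V_\psi f|=|V_\psi g|$ is equivalent to $f\otimes f-g\otimes g\in\ker V^\varrho_{\psi\otimes\psi}$, while $f\otimes f=g\otimes g$ is equivalent to $f\sim g$ modulo $\mathbb{T}$; and since the Hermitian matrices of rank at most $2$ are precisely those of the form $\pm u\otimes u\pm v\otimes v$, with the two equal-sign cases lying in $\ker V^\varrho_{\psi\otimes\psi}$ only when $u,v\perp\pi(G)\psi$ (forcing $\psi$ not to span, which by Remark~\ref{rem:gen_retrieval} is a Zariski-closed condition already contained in the complement of the admissible set), one obtains: $\psi$ \emph{fails} phase retrieval iff $\ker V^\varrho_{\psi\otimes\psi}$ contains a nonzero Hermitian matrix of rank at most $2$. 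The lemma then reduces to the assertion that the set of $\psi$ for which the $\psi$-dependent linear subspace $\ker V^\varrho_{\psi\otimes\psi}\subseteq\mathbb{C}^{X\times X}$ meets the fixed determinantal variety of rank-at-most-$2$ Hermitian matrices outside the origin is a real algebraic variety; this is exactly the algebraic characterization of non-injectivity in phase retrieval, which I would either cite (\cite{MR3303679}) or reprove from the geometry of that determinantal variety. Conjugate phase retrieval is handled in the same way, with ``Hermitian of rank at most $2$'' replaced by ``difference of two positive semidefinite real symmetric matrices of rank at most $2$'' (equivalently, real symmetric of rank at most $4$ with at most two positive and at most two negative eigenvalues), using the Evans--Lai reformulation \cite{MR4029744}.

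The main obstacle is precisely the phase-retrieval and conjugate-phase-retrieval cases. It is tempting to argue that the non-admissible set, being the projection onto the $\psi$-coordinate of the real algebraic set $\{(\psi,f,g):|V_\psi f|=|V_\psi g|,\ f\otimes f\neq g\otimes g\}$, is automatically constructible and therefore has algebraic complement; but this is false over $\mathbb{R}$, because projections of real algebraic sets need not be Zariski closed (the projection of $\{(x,y):y^2=x\}$ is the half-line $\{x\ge 0\}$), and this failure is not repaired by properness of the projection. One genuinely has to exploit the structure of the low-rank matrix varieties involved, not merely the polynomial nature of the measurement map, to conclude that the exceptional set is cut out by polynomial equations. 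No such difficulty arises for matrix recovery or sign retrieval, where the complement of the admissible set is visibly a finite union of determinantal varieties.
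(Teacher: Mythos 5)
Your treatment of matrix recovery coincides with the paper's (polynomial dependence of $V^\varrho_{\psi\otimes\psi}$ on $\psi$ plus the maximal-rank/minor criterion), and your sign-retrieval argument via the complement property --- a finite intersection of finite unions of determinantal Zariski-open sets --- is a correct, essentially self-contained version of what the paper obtains by citation. The gap is in the phase and conjugate phase retrieval cases, and you flag it yourself: after reformulating failure of phase retrieval as the existence of a nonzero Hermitian matrix of rank at most $2$ in $\ker V^\varrho_{\psi\otimes\psi}$, you still need that the set of $\psi$ admitting such a matrix is cut out by polynomial equations, and your closing paragraph concedes that you have no argument for this (your observation that projection/constructibility reasoning fails over $\mathbb{R}$ is correct, which is precisely why the reduction alone does not finish the proof). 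The fallback citation \cite{MR3303679} does not supply the missing statement: it gives genericity --- containment of the failure set in a proper real subvariety when there are at least $4d-4$ vectors --- rather than the assertion that the failure set is itself an algebraic variety, and it says nothing about conjugate phase retrieval, for which you cite \cite{MR4029744} only for the reformulation, not for any openness statement. So as written, two of the four cases of the lemma are not proved.

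The idea your proposal is missing, and which the paper uses to bypass the determinantal geometry altogether, is to exploit the \emph{linear} dependence of the group frame on its generator. The set of general systems $\Psi\in\mathbb{K}^{d\times m}$ doing phase or sign retrieval is Zariski open in $\mathbb{K}^{d\times m}$ by \cite{MR2224902}, and the analogous statement for conjugate phase retrieval is in \cite{MR4029744}; since the map $\psi\mapsto(\pi(x)\psi)_{x\in G}$, equivalently $\psi\mapsto V_\psi$, is conjugate-linear (in particular polynomial in the real and imaginary parts of $\psi$), the set of admissible generators is the preimage of a Zariski-open set under such a map, hence Zariski open. Your route works directly with the quadratic map $\psi\mapsto\psi\otimes\psi$ and the rank-$\le 2$ variety, which is exactly what creates the obstacle you describe; pulling back the known openness result for unstructured frames through the linear parametrization avoids it entirely, at the price of relying on those citations --- which is what the paper does. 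A citation-free proof would require establishing algebraicity of the failure locus for general frames, a substantially harder statement than the lemma itself.
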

\begin{proof}
 We identify a system $\Psi = (\psi_i)_{i=1}^m$ of vectors in $\mathbb{K}^d$ with the matrix $\Psi \in \mathbb{K}^{d \times m}$. Then the set of all $\Psi$ that perform phase or sign retrieval is real Zariski open in $\mathbb{K}^{d \times m}$, by \cite{MR3303679}. The analogous observation has been made for conjugate phase retrieval in \cite{MR4029744}. 
 
 In order to apply these observations to group frames, it is enough to note that the map $\psi \mapsto V_\psi$ is conjugate-linear, and that the preimage of a real Zariski open set under such a map is real Zariski open.
 
 In the case of matrix recovery, the map $\psi \mapsto V_{\psi \otimes \psi}^\varrho$ is polynomial, and by definition, $\psi$ does matrix recovery iff the linear map $V^\varrho_{\psi \otimes \psi}$ has rank $d_\pi^2$. However, the set of maximal rank matrices is real Zariski open, being defined by the nonvanishing of at least one $d_\pi^2 \times d_\pi^2$-subdeterminant. Hence the set of vectors $\psi$ guaranteeing matrix recovery is the polynomial preimage of a real Zariski open set, hence real Zariski open. 
\end{proof}
Note that real Zariski open sets are open in the standard topology, and either empty or of full Lebesgue measure. This shows that once a retrieval property is established for $\pi$, almost every vector $\psi$ guarantees it. However, verifying either that the real Zariski open set is nonempty, or that a given concrete vector is contained in it, remains a considerable challenge, and the results in the present papers bear witness to this fact.

\begin{remark} \label{rem:gf_retrieval}
So far, phase retrieval for group frames $\Psi = \pi(G) \psi$ has been studied almost exclusively for Schr\"odinger representations of (finite or infinite) Heisenberg groups. An extension of these results to projective representations of abelian groups was performed in \cite{MR3901918}. Phase retrieval for the continuous wavelet transform in dimension one was studied in \cite{MR3421917,alaifari2020phase}. Here the underlying representation is the quasiregular representation of the affine group $\mathbb{R} \rtimes \mathbb{R}^*$, and it is fair to say that the phase retrieval problem even for this basic representation is far from being completely understood. Our paper provides an analysis for the finite analogs of $\mathbb{R} \rtimes \mathbb{R}^*$. 

The implications from Remark \ref{rem:gen_retrieval} immediately yield the implications $1.) \Rightarrow 2.) \Rightarrow 5.)$, and $3.) \Rightarrow 4.) \Rightarrow 5.)$ between the following statements. 
\begin{enumerate}
  \item[1.)] $\pi$ does matrix recovery.
  \item[2.)] $\pi$ does phase retrieval.
  \item[3.)] $\pi$ does conjugate phase retrieval.
  \item[4.)] $\pi$ does sign retrieval.
  \item[5.)] $\pi$ is cyclic.
 \end{enumerate}
Unlike in the general case considered in Remark \ref{rem:gen_retrieval}, the implication $2.) \Rightarrow 3.)$ does make sense as soon as $\mathcal{H}_\pi = \mathbb{C}^X$, and $\pi(G) (\mathbb{R}^X) \subset \mathbb{R}^X$ holds. Note that here $2.)$ asks for the existence of a particular complex-valued generating vector, whereas $3.)$ requires a suitable real-valued generating vector. The relationship between conditions $2.)$ and $3.)$ is currently open. 
 
At this point it is also worth noting that not all properties are invariant under unitary equivalence. While it is easily verified that matrix and phase retrieval of a representation are preserved under unitary equivalence, the property that $\pi(G) (\mathbb{R}^X) \subset \mathbb{R}^X$ depends on the realization of the representation. This follows from the fact that most intertwining operators do not commute with taking pointwise conjugates or real parts. A concrete example will be given by the representation $\widehat{\pi}_0$ studied more closely in Section \ref{sect:matrix_retrieval}, which is equivalent to the permutation representation $\pi_0$.  The permutation representation satisfies $\pi_0(G) (\mathbb{R}^X) \subset \mathbb{R}^X$, but its unitary equivalent $\widehat{\pi}_0$ does not. 
 
 We finally point out that not all cyclic representations do phase retrieval. As an extreme case in point, we can take the regular representation $\lambda_G$ of a finite group $G$, which is cyclic. Any cyclic vector $\psi$ for $\lambda_G$ necessarily fulfills $V_\Psi (\mathbb{C}^G) = \mathbb{C}^G$ (for dimension reasons), and the map $F \mapsto |F|$ is clearly not injective on $\mathbb{C}^G$.
\end{remark}

\begin{remark}
 \label{rem:proj_rep}
 Recent research activity has extended the original setup to include \textit{projective representations} of finite groups $G$, see e.g. \cite{MR3901918}. Recall that by definition, a (unitary) projective representation of a finite group $G$ is a map $\pi: G \to \mathcal{U}(\mathcal{H}_\pi)$ such that 
 \[
\forall x,y \in G:   \pi(xy) = \alpha(x,y) \pi(x) \pi(y)~,\alpha: G \times G \to \mathbb{T}~.
 \]
 This extension can be useful for the treatment of special cases, such as the Schr\"odinger representation of a finite Heisenberg group $\mathbb{H}$, which can be alternatively viewed as a projective representation of the quotient group $\mathbb{H} / Z(\mathbb{H})$, where $Z(\mathbb{H})$ denotes the center of $\mathbb{H}$. 
 
 However, it is well-known that every projective representation $\pi$ of a finite group $G$ can be lifted to a standard unitary representation $\pi'$ of a finite central extension $G'$ of $G$, and the system $\pi'(G') \psi$ differs from $\pi(G) \psi$ only by the inclusion of a fixed number of scalar multiples of $w$, for every vector $w \in \pi(G) \psi$. None of the retrieval properties discussed here is affected by this change. Hence the extension to projective representations ultimately does not enlarge the realm of available constructions in any truly substantial manner. 
\end{remark}

\section{Setup: Affine groups and permutation representations}

Let $p>2$ denote a prime, and let $\mathbb{Z}_p = \mathbb{Z}/p \mathbb{Z}$, the associated finite field of order $p$. We typically denote $\mathbb{Z}_p = \{ 0,1, \ldots,p-1 \}$, and understand arithmetic operations on this set to be defined modulo $p$. $\mathbb{Z}_p^* = \mathbb{Z} \setminus \{ 0 \}$ then denotes the multiplicative group of $\mathbb{Z}_p$. The affine group of $\mathbb{Z}_p$ is given by the semidirect product $G = \mathbb{Z}_p \rtimes \mathbb{Z}_p^\ast$, using the canonical multiplicative action of $\mathbb{Z}_p^\ast$ on $\mathbb{Z}_p$. I.e. as a set, $G = \mathbb{Z}_p \times \mathbb{Z}_p^\ast$, with group law $(k,l)(k',l') = (k+lk',ll')$. Unless otherwise specified, the symbol $G$ will always refer to this semidirect product. 

Given $n \in \mathbb{N}$, we let $S(n)$ denote the symmetric group of $\{ 0,\ldots, n-1 \}$. Note that since
$G = \mathbb{Z}_p \rtimes \mathbb{Z}_p^\ast$ acts faithfully on $\mathbb{Z}_p = \{0, 1,\ldots,p-1 \}$ via $(k,l).m = k+lm$, we may regard $G$ as a subgroup of $S(p)$. In particular, one has $ S(3) = \mathbb{Z}_3 \rtimes \mathbb{Z}_3^\ast $. 

The \textbf{quasiregular representation}  $\Pi$ of $S(n)$ acts on $\mathbb{K}^n$ by
\begin{equation}
 (\Pi (h) f)(m) = f(h^{-1} m)~.
\end{equation}
In the case of $n=p$ a prime number, the restriction of $\Pi$ to the subgroup $G = \mathbb{Z}_p \rtimes \mathbb{Z}_p^*$ will be denoted by $\pi = \Pi|_G$, and it is given explicitly by 
\begin{equation} \label{eqn:qr}
 (\pi(k,l) f)(m) = f(l^{-1}(m-k))~.
\end{equation}
Note that our notation does not explicitly distinguish the real and complex cases. In fact, we have $\Pi(S(n)) (\mathbb{R}^n) \subset \mathbb{R}^n$, a fact which allows to discuss conjugate phase and sign retrieval for $\Pi$ and its restrictions. 

\begin{remark} \label{rem:finite_fields}
 The semi-direct product $\mathbb{Z}_p \rtimes \mathbb{Z}_p^\ast$ and its quasi-regular representation were studied in \cite{MR1310638} as a tool for the construction of wavelets on finite fields. In this sense, this paper can also be seen as dealing with a finite field analog of wavelet phase retrieval, see e.g.  \cite{MR3421917,alaifari2020phase}.
\end{remark}

We let $\mathbf{1} = (1,\ldots,1)^T \in \mathbb{K}^n$. 
We make the following observation:
\begin{lemma}
The subspaces $\mathcal{H}_1 = \mathbb{K} \cdot \mathbf{1}$ and $\mathcal{H}_0 = \mathcal{H}_1^\bot$ are invariant under $\Pi$, and therefore also under $\pi$. The associated restrictions $\pi_0,\pi_1$ of $\pi$ acting on $\mathcal{H}_0,\mathcal{H}_1$ respectively are irreducible.
\end{lemma}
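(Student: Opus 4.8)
The plan is to dispose of the invariance claims quickly and then concentrate on the irreducibility of $\pi_0$, which is the only substantive point.

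\textbf{Invariance and the trivial piece.} Every operator $\Pi(h)$ is a permutation matrix, so it fixes $\mathbf 1$ and hence preserves $\mathcal{H}_1 = \mathbb{K}\cdot\mathbf 1$. Permutation matrices are orthogonal (unitary), so the orthogonal complement $\mathcal{H}_0$ of an invariant subspace is again invariant; equivalently one checks directly that $\sum_m (\Pi(h)f)(m) = \sum_m f(m)$, i.e. the zero-sum condition is preserved. Restricting along $G \le S(p)$ transfers both invariances to $\pi$. Since $\pi_1$ acts on the one-dimensional space $\mathcal{H}_1$, it is irreducible with nothing to prove, and it remains to treat $\pi_0$.

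\textbf{Irreducibility of $\pi_0$ over $\mathbb{C}$.} I would diagonalise the translation subgroup. The element $(1,1)\in G$ acts by the cyclic shift $(\pi(1,1)f)(m)=f(m-1)$, whose eigenvectors are the additive characters $e_j(m)=\omega^{jm}$ with $\omega=e^{2\pi i/p}$, the eigenvalue of $e_j$ being $\omega^{-j}$; these eigenvalues are pairwise distinct precisely because $p$ is prime. As $\{e_j\}_{j\in\mathbb{Z}_p}$ is an orthogonal basis and $e_0=\mathbf 1$, we get $\mathcal{H}_0=\operatorname{span}\{e_j : j\in\mathbb{Z}_p^\ast\}$. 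Now let $W\subseteq\mathcal{H}_0$ be a nonzero $\pi$-invariant subspace. Being invariant under the diagonalisable operator $\pi(1,1)$ with simple spectrum, $W$ must be the span of $\{e_j : j\in J\}$ for some nonempty $J\subseteq\mathbb{Z}_p^\ast$. The dilation $(0,l)\in G$ acts by $(\pi(0,l)e_j)(m)=e_j(l^{-1}m)=e_{l^{-1}j}(m)$, so $\pi(0,l)e_j=e_{l^{-1}j}$; since $\mathbb{Z}_p^\ast$ acts transitively on itself by multiplication, the $G$-orbit of a single $e_j$ with $j\neq0$ already spans $\mathcal{H}_0$. Hence $J=\mathbb{Z}_p^\ast$ and $W=\mathcal{H}_0$, proving complex irreducibility.

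\textbf{Real case, and the main obstacle.} For $\mathbb{K}=\mathbb{R}$, the representation $\pi_0$ is defined over $\mathbb{Q}$, so a nonzero proper $\pi$-invariant real subspace $W\subseteq\mathcal{H}_0^{\mathbb{R}}$ would complexify to a nonzero proper $\pi$-invariant subspace $W\otimes_{\mathbb{R}}\mathbb{C}\subseteq\mathcal{H}_0^{\mathbb{C}}$, contradicting the previous step; thus $\pi_0$ is irreducible over $\mathbb{R}$ as well. There is no serious obstacle here: the only points needing care are the passage from the real to the complex case (the Fourier-diagonalisation argument genuinely uses $\mathbb{C}$) and the twofold role of the primality of $p$, which makes the shift spectrum simple and the dilation action on nonzero frequencies transitive. (Alternatively, one could argue that $G$ acts $2$-transitively on $\mathbb{Z}_p$, so the permutation character $\chi$ satisfies $\langle\chi,\chi\rangle = \#\{G\text{-orbits on }\mathbb{Z}_p\times\mathbb{Z}_p\} = 2$, whence $\chi = \mathbf 1 + \chi_0$ with $\chi_0$ irreducible; but the explicit Fourier argument above is cleaner and simultaneously prepares notation used later.)
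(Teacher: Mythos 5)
Your proof is correct. The paper itself states this lemma without proof (it is presented as a standard observation), so there is no argument to compare against line by line; but your Fourier-diagonalisation route is exactly the structure the paper exploits later, where Lemma 6.2 conjugates $\pi$ by $\mathcal{F}$ and identifies $\mathcal{F}(\mathcal{H}_0)$ with $\mathbb{C}^{\{1,\ldots,p-1\}}$, on which the dilations permute the nonzero frequencies transitively --- so your argument doubles as preparation for Section 6, as you note. The alternative you mention (double transitivity gives $\langle\chi,\chi\rangle=2$, hence $\chi=\mathbf 1+\chi_0$ with $\chi_0$ irreducible) is also the shortest complex-case proof and is consistent with the paper's emphasis on $G$ being doubly transitive. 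One small misattribution: the simplicity of the spectrum of the cyclic shift has nothing to do with $p$ being prime --- the eigenvalues $\omega^{-j}$, $j=0,\ldots,n-1$, are distinct for every $n$; primality enters only where you actually use it, namely in the transitivity of $\mathbb{Z}_p^\ast$ on the nonzero frequencies (i.e.\ in $\mathbb{Z}_p$ being a field). Your handling of the real case by complexifying an invariant subspace, and the dismissal of the one-dimensional $\pi_1$, are both fine.
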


In this paper we wish to address the various retrieval properties for $\pi_0$ and/or $\pi$. Ideally, we would like to have explicit and sharp criteria for generating vectors guaranteeing any of the properties, as well as concrete examples. We first note a simple observation regarding $\pi$. 

\begin{corollary}
 The representation $\pi$ does not do matrix recovery.
\end{corollary}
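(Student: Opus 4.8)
The plan is to exploit the fact that $\pi = \pi_0 \oplus \pi_1$ decomposes with a one-dimensional summand $\pi_1$ acting on $\mathcal{H}_1 = \mathbb{K}\cdot\mathbf{1}$, and to use Proposition~\ref{prop:mr_conjrep}: matrix recovery for $\pi$ is equivalent to $\psi \otimes \psi$ being cyclic for $\varrho = (\pi \otimes \overline{\pi})|_{\Delta_G}$ on $\mathbb{C}^{p \times p}$. So it suffices to show that $\varrho$ has no cyclic vector at all, i.e. that $\varrho$ is not a cyclic representation. A clean way to see this is a dimension count: matrix recovery requires the coefficient map to have rank $d_\pi^2 = p^2$, so $\varrho$ would need $p^2$ linearly independent coefficient functions on $G$; but $|G| = p(p-1) < p^2$, so the coefficient map $A \mapsto V^\varrho_{\psi\otimes\psi}A \in \mathbb{C}^G$ cannot be injective for any $\psi$, since its domain has dimension $p^2$ but its codomain has dimension $p(p-1) < p^2$. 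This already finishes the proof.

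First I would state the reduction via Proposition~\ref{prop:mr_conjrep} (or, equivalently, directly via the definition of matrix recovery, which asks the map $A \mapsto (\langle A\pi(x)\psi, \pi(x)\psi\rangle)_{x\in G}$ to be one-to-one on $\mathbb{C}^{p\times p}$). Then I would observe that $|G| = |\mathbb{Z}_p \rtimes \mathbb{Z}_p^*| = p(p-1)$, whereas $\dim \mathbb{C}^{p \times p} = p^2$. Since $p(p-1) < p^2$, the linear map from $\mathbb{C}^{p\times p}$ to $\mathbb{C}^G$ has strictly smaller codomain than domain, hence a nontrivial kernel, regardless of the choice of $\psi$. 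Therefore no generating vector yields matrix recovery, and $\pi$ does not do matrix recovery.

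There is essentially no obstacle here: the statement is the elementary pigeonhole bound "matrix recovery needs at least $d^2$ measurements", already noted in the introduction of the paper, applied to the fact that a group frame over $G$ has exactly $|G|$ vectors. The only thing to be slightly careful about is that this argument shows something stronger and cleaner than going through irreducibility of $\pi_0$ or $\pi_1$: it does not matter that $\pi$ is reducible; even an irreducible representation of $G$ of dimension $p$ would fail matrix recovery for the same counting reason, and indeed this is the phenomenon that makes the restriction to $\pi_0$ (of dimension $p-1$, with $(p-1)^2 \le p(p-1) = |G|$) the natural candidate in the rest of the paper. I would phrase the proof to make this point, since it motivates the sequel.
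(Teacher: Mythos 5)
Your argument is correct and is essentially the paper's own proof: both reduce matrix recovery to injectivity (equivalently, spanning) of the map $A \mapsto (\langle A\pi(x)\psi,\pi(x)\psi\rangle)_{x\in G}$ and then note the dimension count $p^2 \le |G| = p(p-1)$ fails. The extra framing via Proposition \ref{prop:mr_conjrep} and the decomposition $\pi=\pi_0\oplus\pi_1$ is harmless but not needed.
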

\begin{proof}
Matrix recovery is equivalent to stating that ${\rm span} \{ \pi(x) \psi \otimes \pi(x) \psi : x \in G \} = \mathbb{C}^{p \times p}$. For dimension reasons, this requires $p^2 \le |G| = p(p-1)$, which is obviously false. 
\end{proof}

Note that this argument does not apply to the codimension one subspace $\mathcal{H}_0$, and we will indeed show that $\pi_0$ does matrix recovery. 

\section{Sign retrieval for $\pi_0$}

In this section we consider the sign retrieval properties of the permutation representation $\pi_0$ acting on the real vector space $\mathcal{H}_0 = \{ x \in \mathbb{R}^p~:~ \langle x, \mathbf{1} \rangle = 0 \}$. 

\begin{definition} \text{ }
\begin{enumerate}
\item Let $\mathcal{H}$ denote a finite-dimensional Hilbert space, and
$(\psi_i)_{i \in I} \subset \mathcal{H}$. Then $(\psi_i)_{i \in I}$ has the \textit{complement property} iff for all $S \subset I$, either ${\rm span}(\psi_i)_{i \in S} = \mathcal{H}$, or ${\rm span}(\psi_i)_{i \in S \setminus I} =  \mathcal{H}$. 
 
 \item The family has \textit{full spark} if ${\rm dim}(\mathcal{H}) = d \in \mathbb{N}$, and for every subset $S \subset I$ of cardinality $d$, the system $(\psi_i)_{i \in S}$ spans $\mathcal{H}$.
 \end{enumerate}
\end{definition}

The following result are contained in \cite{MR3202304}; see Theorems 3 and 7. 
\begin{theorem} \label{thm:char_pr_cp}
 Let $\Psi = (\psi_i)_{i \in I} \subset \mathcal{H}$ denote a system of vectors contained in the finite-dimensional Hilbert space $\mathcal{H}$. 
 \begin{enumerate}
  \item[(a)] If $\Psi$ does phase or sign retrieval, then $\Psi$ has the complement property.
  \item[(b)] If $\mathcal{H}$ is a real Hilbert space, then $\Psi$ does sign retrieval iff it has the complement property. 
 \end{enumerate}
\end{theorem}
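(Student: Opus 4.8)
The statement is a classical characterisation (essentially \cite{MR2224902} together with \cite{MR3202304}); the plan is to prove both implications by the standard partition argument, splitting the index set $I$ according to the sign relating two signals with identical intensity measurements.

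For part (a) I would argue by contraposition. Assume $\Psi$ fails the complement property, so there is a subset $S \subset I$ such that neither $(\psi_i)_{i \in S}$ nor $(\psi_i)_{i \in I \setminus S}$ spans $\mathcal{H}$; pick nonzero vectors $u$ orthogonal to ${\rm span}(\psi_i)_{i \in S}$ and $v$ orthogonal to ${\rm span}(\psi_i)_{i \in I \setminus S}$, and set $f = u+v$, $g = u-v$. A one-line computation gives $\langle f, \psi_i\rangle = \langle g, \psi_i\rangle$ for $i \in I \setminus S$ and $\langle f, \psi_i\rangle = -\langle g, \psi_i\rangle$ for $i \in S$, hence $|V_\Psi f| = |V_\Psi g|$ and $\mathcal{A}_\Psi(f) = \mathcal{A}_\Psi(g)$. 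It then remains only to check that $f$ and $g$ define different points of $\mathcal{H}/\mathbb{T}$ (resp.\ $\mathcal{H}/\{\pm 1\}$). Here I would invoke that phase or sign retrieval forces $\Psi$ to be total (Remark \ref{rem:gen_retrieval}): if $u$ and $v$ were parallel, a common nonzero multiple would be orthogonal to \emph{all} of $\Psi$, contradicting totality; hence $\{u,v\}$, and therefore $\{u+v,u-v\}$, is linearly independent, so in particular $g \notin \mathbb{T}f$. This contradicts phase/sign retrieval, proving (a).

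For part (b), assume $\mathcal{H}$ is a real Hilbert space and $\Psi$ has the complement property; I would show $\mathcal{A}_\Psi$ is injective on $\mathcal{H}/\{\pm 1\}$. Suppose $\mathcal{A}_\Psi(f) = \mathcal{A}_\Psi(g)$, i.e.\ $|\langle f, \psi_i\rangle| = |\langle g, \psi_i\rangle|$ for all $i$. Since the scalars are real, for each $i$ we have $\langle f, \psi_i\rangle = \langle g, \psi_i\rangle$ or $\langle f, \psi_i\rangle = -\langle g, \psi_i\rangle$; let $S$ collect the indices of the first type and $I \setminus S$ the remaining ones. Then $f-g$ is orthogonal to $(\psi_i)_{i \in S}$ and $f+g$ is orthogonal to $(\psi_i)_{i \in I \setminus S}$. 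By the complement property one of these two families spans $\mathcal{H}$, forcing $f = g$ or $f = -g$. Hence $\Psi$ does sign retrieval, and together with (a) this gives the stated equivalence.

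The two genuinely load-bearing points are: in (a), the totality argument ruling out $g \in \mathbb{T}f$ (short but essential); and in (b), the elementary but crucial fact that two \emph{real} numbers of equal modulus agree up to sign, which is exactly what produces the clean dichotomy $f = \pm g$ and is where the real-scalar hypothesis enters. I do not expect any obstacle beyond being careful with these; over $\mathbb{C}$ the second fails, which is precisely why the complement property is in general only necessary — not sufficient — for phase retrieval, matching statement (a) with no converse.
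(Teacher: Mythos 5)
Your argument is correct, and it is the standard Balan--Casazza--Edidin argument; note that the paper itself does not prove this theorem at all but simply cites it (Theorems 3 and 7 of \cite{MR3202304}), so there is no internal proof to compare against --- your proposal supplies the self-contained argument the paper outsources. The two load-bearing points you flag are handled properly: in (a), splitting off the degenerate case $u \parallel v$ via totality (which, by Remark \ref{rem:gen_retrieval}, follows from phase or sign retrieval) is exactly what is needed to conclude that $\{u,v\}$, hence $\{u+v,u-v\}$, is linearly independent, so that $g \notin \mathbb{T} f$ (resp.\ $g \neq \pm f$); in (b), the real-scalar dichotomy gives $f-g \perp (\psi_i)_{i \in S}$ and $f+g \perp (\psi_i)_{i \in I \setminus S}$, and the complement property forces one of them to vanish. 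One cosmetic remark: you announce part (a) as a proof by contraposition but then invoke a consequence of the retrieval hypothesis (totality), so it is really a proof by contradiction --- assume retrieval holds and the complement property fails, then derive two vectors in distinct orbits with identical measurements; the logic is sound either way, and one could alternatively observe that failure of totality already kills the complement property (take $S=I$), so totality may be assumed without loss of generality even in a strict contraposition. Your closing comment is also accurate: over $\mathbb{C}$ the modulus dichotomy fails, which is precisely why (b) has no complex analogue and the complement property is only necessary there.
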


The following theorem is implied by Theorem 10 in \cite{MR4121508}.
\begin{theorem}
The quasiregular representation $\pi_0$ on $\mathcal{H}_0$ has full spark, which means that there exists a real-valued vector $\psi \in \mathcal{H}_0$ such that $(\pi(x) \psi)_{x \in G}$ has full spark. Any such vector does sign retrieval.  
\end{theorem}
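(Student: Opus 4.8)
The plan is to decompose the statement into two claims: first, that the full spark property for a generic vector in $\mathcal{H}_0$ holds (which is the content cited from Theorem 10 in \cite{MR4121508}), and second, that full spark implies sign retrieval for real-valued systems. The second claim follows immediately from Theorem \ref{thm:char_pr_cp}(b): a real Hilbert space system does sign retrieval iff it has the complement property, so it suffices to show that any full spark system of $|G| = p(p-1)$ vectors in the $(p-1)$-dimensional space $\mathcal{H}_0$ has the complement property. For this, pick any $S \subseteq G$. If $|S| \ge p-1$, then $S$ contains a subset of size $p-1$, which spans $\mathcal{H}_0$ by full spark, hence $\mathrm{span}(\pi(x)\psi)_{x \in S} = \mathcal{H}_0$. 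Otherwise $|S| < p-1$, so $|G \setminus S| > p(p-1) - (p-1) = (p-1)^2 \ge p-1$, and the same argument applies to the complement. Since $p(p-1)\ge 2(p-1)$ for $p \ge 2$, at least one of $S$, $G\setminus S$ has at least $p-1$ elements, so the complement property holds.

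Next I would address the full spark claim. The representation $\pi$ acts on $\mathbb{K}^p$ by the formula $(\pi(k,l)f)(m) = f(l^{-1}(m-k))$, i.e. by permutations of the coordinate set $\mathbb{Z}_p$; restricted to $\mathcal{H}_0 = \mathbf{1}^\perp$ this is $\pi_0$. To invoke the cited result one must check that the hypotheses of Theorem 10 in \cite{MR4121508} are met by this permutation action of $G$ on $\{0,\dots,p-1\}$. The relevant structural facts are that $G = \mathbb{Z}_p \rtimes \mathbb{Z}_p^*$ acts \emph{sharply $2$-transitively} on $\mathbb{Z}_p$ (given any two ordered pairs of distinct points there is a unique group element carrying one to the other), and that $|G| = p(p-1) \ge p = \dim(\mathbb{K}^p)$, so the orbit of a generic vector in $\mathcal{H}_0$ is large enough to have full spark in a $(p-1)$-dimensional space. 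I would state precisely which genericity conclusion the cited theorem yields — namely, the existence of a real vector whose $G$-orbit restricted to $\mathcal{H}_0$ is full spark — and then note that by Lemma \ref{lem:Zariski} (or directly, since full spark is a Zariski-open condition defined by nonvanishing of finitely many minors) this set of good vectors is Zariski open, hence of full measure once nonempty.

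The main obstacle is the full spark step: unlike the complement-property deduction, which is elementary, establishing that $\pi_0$ admits a full-spark generating vector is a genuine result, and the proof in the excerpt can only be completed by quoting \cite{MR4121508} and verifying its hypotheses. If one wanted a self-contained argument one would have to show that the generic $(p-1)\times(p-1)$ minor of the orbit matrix $[\pi(x)\psi]_{x \in G}$, viewed as a polynomial in the entries of $\psi$ constrained to $\mathcal{H}_0$, does not vanish identically; the sharp $2$-transitivity of $G$ on $\mathbb{Z}_p$ and the resulting circulant-like / Vandermonde-type structure of the relevant submatrices would be the tool, but carrying this out in full is exactly what \cite{MR4121508} does, so I would not reproduce it. Thus the proof I would write is short: cite \cite{MR4121508} for the existence of a full-spark real generating vector, then invoke Theorem \ref{thm:char_pr_cp}(b) together with the counting argument above to conclude that any such vector does sign retrieval.
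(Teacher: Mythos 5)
Your proposal is correct and follows essentially the same route as the paper: cite Theorem 10 of \cite{MR4121508} for the existence of a real-valued full-spark generating vector, observe that $|G| = p(p-1)$ is large enough that any full spark frame indexed by $G$ has the complement property, and conclude sign retrieval via Theorem \ref{thm:char_pr_cp}(b). Your counting argument for the complement property is just a spelled-out version of the paper's one-line remark, and your Zariski-openness comment is a reasonable (if not strictly needed) supplement to the paper's appeal to ``closer inspection'' for real-valuedness.
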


\begin{proof}
The first statement is Theorem 10 in \cite{MR4121508}; a closer inspection of its proof shows that a vector whose orbit is a full spark frame can indeed be chosen to be real-valued. Finally, since $G$ has $p(p-1) > 2p-1$ elements, every full spark frame indexed by $G$ necessarily has the complement property. Hence, Theorem \ref{thm:char_pr_cp} yields sign retrieval.
\end{proof}

%
%

\section{Conjugate phase retrieval for doubly transitive permutation groups}

From now on we let $\mathcal{H}_0 = \langle \mathbf{1} \rangle \subset \mathbb{C}^p$. 
Throughout this section, we may assume that $\Gamma \subset S(n)$ is a subgroup of the symmetric group. Additionally, we also assume that $n\ge 3$ to avoid trivialities. Recall next, that $\Gamma$ is called $k$-fold transitive if for every pair of $k$-tuples $u,v \in \{0,\ldots, n-1 \}^k$, where both $u$ and $v$ have pairwise distinct entries, there exists $h \in \Gamma$ such that 
\[
 \forall i=1,\ldots,k~:~ v_i = u_{h(i)}~.
\] $2$-fold transitive groups are also called \textbf{doubly transitive}. An important class of doubly transitive groups is given by $G = \mathbb{Z}_p \rtimes \mathbb{Z}_p^\ast$, for $p \ge 3$. 

We intend to prove that $2$-fold transitive groups do conjugate phase retrieval. The argument will be based on a particular choice of generating vector, and the following, well-known property of the complex plane.

\begin{remark} \label{rem:euclid_geom}
 Let $(y_k)_{k=0,\ldots,n-1}$, $(z_k)_{k=0,\ldots,n-1} \in \mathbb{C}^n$ satisfy 
 \begin{equation}
 \label{eqn:part_isom}
  \forall l,k = 0,\ldots,n-1~:~|y_l-y_k| = |z_l -z_k|~.
 \end{equation}
 Then there exists an isometry $\varphi : \mathbb{C} \to \mathbb{C}$ with $\varphi(y_l) = z_l$, for all $l=0,\ldots,n-1$. 
 
Furthermore, the group of isometries of the Euclidean space $\mathbb{C} \cong \mathbb{R}^2$ coincides with the Euclidean motion group, i.e. it is the composition of a translation and an orthogonal matrix. In addition, any orthogonal mapping on $\mathbb{R}^2$ is given by (complex) multiplication with an element of $\mathbb{T}$, possibly followed by conjugation. 

In short, we have deduced from (\ref{eqn:part_isom}) that either 
 \[
  \forall l=0,\ldots, n-1~:~z_l = \alpha y_l + w
 \]
 or 
 \[
  \forall l=0,\ldots, n-1~:~ z_l = \alpha \overline{y_l} + w~,
 \]
holds, with $\alpha \in \mathbb{T}$ and $w \in \mathbb{C}$.
\end{remark}

We then have the following observation, which implies conjugate phase retrieval for $\pi_0$. 
\begin{theorem} \label{thm:dble_cpr}
 Let $\Gamma < S(n)$ denote a doubly transitive subgroup. Pick $k_0,l_0 \in \{0,\ldots, n-1 \}$ with $k_0 \not= l_0$, and let $\psi = \delta_{k_0} - \delta_{l_0}$. Then $\Pi(\Gamma) \psi$ does conjugate phase retrieval on the invariant subspace $\mathcal{H}_0$. 
\end{theorem}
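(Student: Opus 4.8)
The plan is to exploit the structure of the generating vector $\psi = \delta_{k_0} - \delta_{l_0}$ together with double transitivity, reducing conjugate phase retrieval to the elementary planar fact recorded in Remark~\ref{rem:euclid_geom}. Suppose $f, g \in \mathcal{H}_0$ satisfy $|V_\psi f| = |V_\psi g|$. For each $h \in \Gamma$, the coefficient $V_\psi f(h) = \langle f, \Pi(h)\psi\rangle = \langle f, \delta_{h(k_0)} - \delta_{h(l_0)}\rangle = f(h(k_0)) - f(h(l_0))$. So the hypothesis says precisely that $|f(h(k_0)) - f(h(l_0))| = |g(h(k_0)) - g(h(l_0))|$ for all $h \in \Gamma$. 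By double transitivity, as $h$ ranges over $\Gamma$, the pair $(h(k_0), h(l_0))$ ranges over \emph{all} ordered pairs of distinct indices in $\{0,\ldots,n-1\}$. Hence we obtain
\[
 \forall i \ne j~:~ |f(i) - f(j)| = |g(i) - g(j)|~.
\]

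Now I would apply Remark~\ref{rem:euclid_geom} with $y_k = f(k)$ and $z_k = g(k)$: the family of pairwise distances between the entries of $f$ matches that of the entries of $g$, so there is a planar isometry carrying $f(k) \mapsto g(k)$ for all $k$, and therefore either $g = \alpha f + w\mathbf{1}$ or $g = \alpha \overline{f} + w\mathbf{1}$ for some $\alpha \in \mathbb{T}$, $w \in \mathbb{C}$. The final step is to eliminate the translation: since $f, g \in \mathcal{H}_0$, both are orthogonal to $\mathbf{1}$, i.e. have zero entry-sum. In the first case, summing the $n$ coordinates of $g = \alpha f + w\mathbf{1}$ gives $0 = \alpha \cdot 0 + w n$, hence $w = 0$ (as $n \ge 3$, in particular $n \ne 0$ in the relevant sense, but here we are summing complex numbers so $wn = 0 \Rightarrow w = 0$); likewise $\overline{f}$ has zero entry-sum, so the second case forces $w = 0$ as well. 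Thus $g = \alpha f$ or $g = \alpha \overline{f}$, which is exactly conjugate phase retrieval on $\mathcal{H}_0$.

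Two small points need care rather than constituting genuine obstacles. First, one must check that the subcase with $w = 0$ is consistent, namely that a real-valued $\psi$ like $\delta_{k_0} - \delta_{l_0}$ indeed lies in (or rather generates a frame for) $\mathcal{H}_0$: $\psi$ has zero entry-sum, so $\psi \in \mathcal{H}_0$, and its $\Gamma$-orbit spans $\mathcal{H}_0$ because $\mathcal{H}_0$ is irreducible under $\pi$ (for $\Gamma = G$) and, more generally for doubly transitive $\Gamma$, the orbit of differences of delta functions is easily seen to be total in $\mathcal{H}_0$. This totality is needed so that the statement ``$\Pi(\Gamma)\psi$ does conjugate phase retrieval'' is not vacuous in the sense of failing injectivity of the coefficient map; but in fact the argument above already shows the relevant injectivity directly and does not separately invoke totality. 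Second, one should note that the restriction of $\Pi(\Gamma)$ to $\mathcal{H}_0$ maps $\mathbb{R}^n \cap \mathcal{H}_0$ into itself, so that the hypothesis of Definition~\ref{defn:pr_rep} for conjugate phase retrieval is met.

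The step I expect to require the most thought is simply making sure the reduction via double transitivity is set up with the correct variance: we need ordered pairs $(h(k_0), h(l_0))$ to exhaust all ordered pairs of distinct indices, which is exactly the $k=2$ case of the transitivity definition given in the text, so this is clean. No genuinely hard estimate or construction appears; the content is entirely in recognizing that $|\langle f, \Pi(h)\psi\rangle|$ encodes a pairwise distance and that $\mathcal{H}_0$ kills the residual translational ambiguity.
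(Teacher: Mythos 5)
Your proposal is correct and follows essentially the same route as the paper's proof: double transitivity turns the frame measurements into the full set of pairwise distances $|f(i)-f(j)|=|g(i)-g(j)|$, Remark~\ref{rem:euclid_geom} then yields $g=\alpha f+w\mathbf{1}$ or $g=\alpha\overline{f}+w\mathbf{1}$, and the zero-sum condition defining $\mathcal{H}_0$ forces $w=0$. Your additional remarks on totality and on $\Pi(\Gamma)$ preserving real vectors are fine but not needed beyond what the paper states.
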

\begin{proof}
 Let $(k,l)$ denote any distinct pair of elements, and let $h \in \Gamma$ such that $h(k_0) = k,h(l_0) = l$. Then $  V_\psi f (h) = f(k)-f(l)$ and this shows for all $f,g \in \mathcal{H}_0$:
\[
|V_\psi f| = |V_\psi g| \Longleftrightarrow \forall (k,l) \in \{0,\ldots, n-1 \}^2: |f(k)-f(l)| = |g(k)-g(l)|~.
\]

An application of Remark \ref{rem:euclid_geom} yields either $g(l) = \alpha f(l) + w$ or $g(l) = \alpha \overline{g(l)}+w$ holds for all $l$, with $\alpha \in \mathbb{T}$ and $w \in \mathbb{C}$. 

To complete the proof, it remains to show $w=0$, but this follows (for the case without conjugation) from $f,g \in \mathcal{H}_0$ via 
\[
 0 = \sum_{l=0}^{n-1} g(l) = \sum_{l=0}^{n-1} \alpha f(l) + w = nw + \alpha \sum_{l=0}^{n-1} f(l) = nw~. 
\] The second case, involving complex conjugation, is treated entirely analogously. This establishes conjugate phase retrieval.
\end{proof}

\begin{remark}
 We now turn to the question of conjugate phase retrieval for $\Pi$ on $\mathbb{C}^n$, using the vector 
\[
 \psi_1 = \underbrace{\delta_{k_0}-\delta_{l_0}}_{\psi_0} + n^{-1/2}\mathbf{1}~.
\]
Note that the projection of $\psi_1$ onto the subspace $\mathcal{H}_0$ does conjugate phase retrieval for the restriction of $\pi$ to that subspace, and the same can be said (for trivial resasons) of the orthogonal complement of $\mathcal{H}_0$, the one-dimensional space $\mathbb{C} \cdot \mathbf{1}$. Yet these properties are generally not enough to guarantee conjugate phase retrieval for the full space $\mathbb{C}^n$, as the following example for $n=3$ shows: 

Let $\xi = e^{2 \pi i/3}$, and define $y = (y_0,y_1,y_2) \in \mathbb{C}^3$ by letting
\[
  y_l = \xi^l~.
\] We then have $\sum_k y_k = 0$, and one easily verifies,
for all $x \in S(3)$,
\[
 |\langle y,\Pi(x) \psi_1 \rangle| = |1-\xi| = |\langle  \frac{|1-\xi|}{\sqrt{3}} \mathbf{1}, \Pi(x) \psi_1 \rangle|~. 
\] On the other hand, the vectors $w = \frac{|1-\xi|}{\sqrt{3}} \mathbf{1}$ and $y= (y_0,y_1,y_2)$ are clearly distinct, even orthogonal.

Note that this example does not yet establish that $\Pi$ does not do conjugate phase retrieval on $\mathbb{C}^{3}$, as it only applies to a particular choice of generating vector $\psi_1$. But it does illustrate that the problem of conjugate phase retrieval on all of $\mathbb{C}^3$ cannot be addressed by being able to solve it on each invariant subspace $\mathcal{H}_0$,$\mathcal{H}_1$ separately, somewhat contrary to common (linear) representation theoretic wisdom. This phenomenon emphasizes the nonlinear nature of the conjugate phase retrieval problem. 

We do not know whether analogs of the counterexample constructed here are available in dimensions $n>3$. 
\end{remark}
As a further application of the basic proof principle exploited in this section, we have the following observation: 
\begin{proposition}
 Let $n \ge 3$, and let $\Gamma < S(n)$ denote a doubly transitive subgroup. Pick $k_0,l_0 \in \{0,\ldots, n-1 \}$ with $k_0 \not= l_0$, and let $\psi_1 = \delta_{k_0}-\delta_{l_0} + n^{-1/2} \mathbf{1}$. Then $\Pi(\Gamma) \psi_1$ does sign retrieval on $\mathbb{R}^n$. 
\end{proposition}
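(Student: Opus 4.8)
The plan is to mimic the proof of Theorem \ref{thm:dble_cpr}, but now working over $\RR^n$ and exploiting Theorem \ref{thm:char_pr_cp}(b): over a real Hilbert space, sign retrieval is equivalent to the complement property. So it suffices to show that the orbit $\Pi(\Gamma)\psi_1$ has the complement property in $\RR^n$. Write $\psi_1 = \psi_0 + n^{-1/2}\mathbf{1}$ with $\psi_0 = \delta_{k_0}-\delta_{l_0}$, and for $h \in \Gamma$ note that $V_{\psi_1}^\Pi f(h) = \langle f, \Pi(h)\psi_1\rangle = f(h(k_0)) - f(h(l_0)) + n^{-1/2}\langle f, \mathbf{1}\rangle$. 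Decomposing $f = f_0 + a n^{-1/2}\mathbf{1}$ with $f_0 \in \mathcal{H}_0$, this is $f_0(h(k_0)) - f_0(h(l_0)) + a$.

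First I would set up the combinatorial reformulation. Fix $S \subseteq \Gamma$; I must show that either $(\Pi(h)\psi_1)_{h\in S}$ spans $\RR^n$ or $(\Pi(h)\psi_1)_{h \in \Gamma\setminus S}$ does. Suppose $(\Pi(h)\psi_1)_{h\in S}$ does \emph{not} span; then there is a nonzero $f \in \RR^n$ orthogonal to all $\Pi(h)\psi_1$, $h \in S$, i.e. writing $f = f_0 + an^{-1/2}\mathbf{1}$, we have $f_0(h(k_0)) - f_0(h(l_0)) = -a$ for every $h \in S$. By double transitivity, as $h$ ranges over $\Gamma$ the ordered pair $(h(k_0), h(l_0))$ ranges over all ordered pairs of distinct elements of $\{0,\ldots,n-1\}$, each hit the same number of times; so the condition "$h \in S$ forces $f_0(h(k_0))-f_0(h(l_0)) = -a$" translates into a statement about which ordered pairs $(i,j)$ of distinct indices satisfy $f_0(i)-f_0(j) = -a$. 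I then need: for the \emph{complementary} set of pairs (those realized by $h \in \Gamma \setminus S$, up to the covering multiplicity), the only $g = g_0 + bn^{-1/2}\mathbf{1}$ with $g_0(i)-g_0(j) = -b$ for all such pairs is $g = 0$.

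The heart of the argument is therefore the following graph-theoretic fact. Consider the complete directed graph on $\{0,\ldots,n-1\}$; color an ordered edge $(i,j)$ "red" if $f_0(i)-f_0(j)=-a$ and "blue" otherwise. I must show that red edges cannot carry a nonzero affine solution and blue edges carry one simultaneously — more precisely, that at least one color class $C$ has the property that the only $(g_0,b)$ with $g_0(i)-g_0(j)=-b$ on all of $C$ is zero. If $a = 0$: the red edges are pairs with $f_0(i)=f_0(j)$ (an equivalence relation's edge set, minus loops), and since $f_0\neq 0$ and $f_0 \in \mathcal{H}_0$, $f_0$ is nonconstant, so some value class is a proper nonempty subset; the blue edges then form a complete bipartite-ish graph connecting distinct value classes, which is connected on all $n$ vertices, forcing $g_0$ constant hence (being in $\mathcal{H}_0$) zero and $b=0$. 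If $a \neq 0$: a red edge $(i,j)$ means $f_0(i) = f_0(j) - a$; chains of red edges force arithmetic progressions, and because $f_0$ takes finitely many values the red graph is rather sparse — in fact for any fixed value $v$ there is at most one $i$ with $f_0(i) = v$ and one $j$ with $f_0(j) = v - a$ relevant along a given chain, so the red graph is a union of vertex-disjoint simple paths (following the shift by $-a$). A union of disjoint paths on $n$ vertices has at most $n-1$ edges, hence misses many edges, and I would argue the blue complement is connected and spans, again forcing $g_0$ constant and $b=0$.

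I expect the main obstacle to be making the last case fully rigorous: carefully checking that for \emph{every} possible nonzero $f_0 \in \mathcal{H}_0$ the blue (complementary) edge set is connected and spanning enough to pin down $g_0$ up to a constant, and simultaneously tracking the affine parameter $b$ so that one does not accidentally allow a nonzero $(g_0,b)$ on the blue side. An alternative, possibly cleaner route: observe that $\Pi(\Gamma)\psi_1$ contains $\Pi(\Gamma)\psi_0$ "plus a rank-one perturbation" and combine the complement property of $\Pi(\Gamma)\psi_0$ on $\mathcal{H}_0$ (which follows from Theorem \ref{thm:dble_cpr} via Theorem \ref{thm:char_pr_cp}(a), since conjugate phase retrieval implies sign retrieval implies the complement property on the real subspace $\mathcal{H}_0$) with the fact that adding the $\mathbf{1}$-component lets the orbit detect the coefficient $a$; one would show that a subset $S$ failing to span $\RR^n$ must already fail to span $\mathcal{H}_0$ after projection, and then use the complement property there together with a short argument that recovers the $\mathbf{1}$-direction from the complementary indices. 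I would try the direct graph argument first and fall back to this decomposition if the case analysis becomes unwieldy.
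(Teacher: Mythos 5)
Your reduction via Theorem \ref{thm:char_pr_cp}(b) is a legitimate and genuinely different route from the paper (which never mentions the complement property here, but instead uses double transitivity to rewrite $|\langle f,\Pi(x)\psi_1\rangle|=|\langle g,\Pi(x)\psi_1\rangle|$ as the system $|f(k)-f(l)|^2+a^2=|g(k)-g(l)|^2+b^2$, $a(f(k)-f(l))=b(g(k)-g(l))$, and runs a direct case analysis on whether $f$ is constant and whether $a,b$ vanish). Your case $a=0$ is essentially sound, provided you note that a blue pair occurs in both orientations, which is what actually forces $b=0$ before connectivity gives $g_0$ constant. The genuine gap is in the case $a\neq 0$. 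The structural claim that the red graph is a union of vertex-disjoint simple paths rests on ``for any fixed value $v$ there is at most one $i$ with $f_0(i)=v$'', which is false: $f_0\in\mathcal{H}_0$ may repeat values, and if, say, $f_0$ equals $0$ on two indices and $a$ on two others, the red edges already contain a directed $K_{2,2}$. Worse, even granting that the blue graph is connected, the conclusion does not follow: the blue constraints are \emph{directed}, $g_0(i)-g_0(j)=-b$, so connectivity of the underlying undirected graph only determines $g_0$ up to shifts by multiples of $b$ along walks; it does not by itself force $b=0$. This is exactly the obstacle you flag, and it is left unresolved, so the proof is incomplete as written.

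The case can be repaired, and the repair recovers the same key fact the paper uses. When $a\neq 0$, for every unordered pair $\{i,j\}$ at least one orientation is blue, since both orientations red would give $f_0(i)-f_0(j)=-a$ and $f_0(j)-f_0(i)=-a$, hence $a=0$. If some pair has \emph{both} orientations blue, then $b=0$, and then every unordered pair yields $g_0(i)=g_0(j)$ via its blue orientation, so $g_0$ is constant, hence $0$ by the zero-sum condition, and $g=0$. In the remaining case every difference $f_0(i)-f_0(j)$, $i\neq j$, lies in $\{a,-a\}$ with $a\neq 0$, which forces the $n$ values of $f_0$ to be pairwise at distance $|a|$ on the real line; for $n\ge 3$ this is impossible (summing the differences over a triple gives $\pm a\pm a=\pm a$). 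This ``no $n\ge 3$ equidistant points in $\mathbb{R}$'' argument is precisely the step the paper's own proof invokes twice in its case analysis, and your decomposition-based fallback would need it as well; without it (or an equivalent device pinning down $b$), your graph-theoretic plan does not close.
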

\begin{proof}
 Assume $f = f_0 + a \cdot n^{-1/2} \mathbf{1}$, $g = g_0 + b \cdot n^{-1/2} \mathbf{1}$ with $f_0,g_0 \in \mathbb{R}^n \cap \mathcal{H}_0$ and $a,b \in \mathbb{R}$, such that 
 \[
\forall x \in \Gamma~:~  |\langle f, \Pi(x) \psi_1 \rangle| = |\langle g, \Pi(x) \psi_1 | ~.
 \] Since $\Gamma$ is doubly transitive, this is equivalent to
 \begin{equation}
 \label{eqn:sign_ret}
  \forall l \not= k~:~|f(k)-f(l)|^2+a^2 = |g(k)-g(l)|^2+b^2~,~a(f(k)-f(l)) = b(g(k)-g(l))~.
 \end{equation}
 
 Assuming that $f$ is constant then yields either $b=0$, or that $g$ is constant. The latter implies $|a|=|b|$, and $g = \pm f$, as desired (since $f_0 = g_0=0$). If $b=0$, we get
 \[
  |g(k)-g(l)|=|a|~,~l \not= k~.
 \] However, there do not exist $n \ge 3$ equidistant points on the real line, hence this case can be excluded.
 
 Hence it remains to address the case that $f$ is not constant. Assuming $a=0$ then requires that $b(g(k)-g(l)) = 0$. If $b=0$, we obtain
 \[
\forall l \not= k~:~  |f(k)-f(l)| = |g(k)-g(l)|
 \] and thus $g = \pm f$ by Theorem \ref{thm:dble_cpr}. If $b\not=0$, it follows that $g$ is constant, and thus 
 \[
  \forall l \not=k ~:~ |f(k)-f(l)|^2 = b^2-a^2
 \] and since $f$ is not constant, $b^2-a^2>0$ follows. As above, the fact that there are no $n \ge 3$ equidistant points in $\mathbb{R}$ excludes this possibility.
 
 The final case to consider is therefore that $f$ is not constant and $a \not= 0$. This implies that 
 \[
  \forall l \not= k ~:~ g(k)-g(l) = \frac{b}{a} (f(k)-f(l))~.
 \]
Plugging this into the first equation of (\ref{eqn:sign_ret}) and simplifying yields 
\[
 \forall l \not= k ~:~ |f(k)-f(l)|^2 = \frac{b^2-a^2}{1-b^2/a^2} = -a^2~, 
\] which contradicts our assumptions on $a$ and $f$. 
\end{proof}

\section{Matrix recovery for $\pi_0$}

\label{sect:matrix_retrieval}

In this section we establish the matrix recovery property for $\pi_0$. Recall that we are interested in finding specific cyclic vectors for the representation $\varrho_0$ obtained by conjugating with $\pi_0$. The basic strategy of the following is to first decompose $\varrho_0$ into suitable subrepresentations, with the help of an explicitly derived intertwining operator; see Proposition \ref{prop:decompose}. This decomposition  will provide fairly explicit access to criteria for cyclic vectors. The remaining task is to apply the intertwining operator from Proposition \ref{prop:decompose} to matrices of the type $A = \varphi \otimes \varphi$, and determine vectors $\varphi$ that fulfill the criteria for cyclic vectors. As we will see, the quadratic dependence of $A$ on $\varphi$ makes this final step a further  nontrivial obstacle. 

The starting point in this approach is to switch to a unitarily equivalent representation $\widehat{\pi}_0$, obtained by conjugating $\pi_0$ with the Fourier transform on $\mathbb{C}^p$. In the following, we will denote elements of $\mathbb{C}^p = \mathbb{C}^{\{ 0,\ldots,p-1 \}}$ as $f = (f(0),\ldots,f(p-1))$. With this notation, the Fourier transform is given by  
\begin{equation}
\label{eqn:F_additive}
 \mathcal{F} : \mathbb{C}^p \to \mathbb{C}^p~,~\mathcal{F}(f)(m) = p^{-1/2}\sum_{n=0}^{p-1} f(n) e^{-2 \pi i n m/p}~. 
\end{equation} We will also use the notation $\widehat{f} = \mathcal{F}(f)$. With the normalization used here, the Plancherel theorem becomes $\| \widehat{f} \|^2 = \| f \|^2$, whereas the convolution theorem for the convolution product
\[
 (f \ast g) (m) = \sum_{n=0}^{p-1} f(n) g(m-n)~. 
\]
reads as
\[
 (f \ast g)^\wedge(k) = p^{1/2} \widehat{f}(k) \widehat{g}(k)~. 
\]

The chief advantage of the Fourier transform is that it diagonalizes the translation action of the affine group. The effect of conjugation with $\mathcal{F}$ is spelled out in the following lemma.
\begin{lemma} \label{lem:pi_four}
 Define $\widehat{\pi} : G \to \mathcal{U}(\mathbb{C}^p)$, $\widehat{\pi}(k,l) = \mathcal{F} \pi(k,l) \mathcal{F}^*. $
 \begin{enumerate}
  \item[(a)] $\widehat{\pi}(k,l) f(m) = e^{-2 \pi i km/p} f(lm)$, with $\mathcal{F}(\mathcal{H}_1) = \mathbb{C} \cdot \delta_0$, $\mathcal{F}(\mathcal{H}_0) = \{ 0 \} \times \mathbb{C}^{\{ 1,\ldots,p-1 \}} \cong \mathbb{C}^{\{ 1,\ldots,p-1 \}} $. 
  \item[(b)] Let $\varrho_1$ denote the conjugation action of $\widehat{\pi}$ on $\mathbb{C}^{\{1,\ldots,p-1 \} \times \{ 1,\ldots, p-1 \}}$. Then one has, for all $A \in \mathbb{C}^{\{1,\ldots,p-1 \} \times \{ 1,\ldots, p-1 \}}$
  \[
   \left( \varrho_1(k,l) A \right) (m,n) = e^{-2 \pi i k (m-n)/p} A(lm,ln)~. 
  \]
 \end{enumerate}
\end{lemma}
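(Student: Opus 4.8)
The plan is to compute $\widehat{\pi}(k,l) = \mathcal{F}\pi(k,l)\mathcal{F}^*$ directly from the definitions, and then read off the consequences for the invariant subspaces and the conjugation action. For part (a), I would first treat the pure dilation $(0,l)$ and the pure translation $(k,1)$ separately, since $\pi(k,l) = \pi(k,1)\pi(0,l)$, and the behaviour of $\mathcal{F}$ under each is classical. Concretely, from \eqref{eqn:qr} one has $(\pi(0,l)f)(m) = f(l^{-1}m)$, and a change of summation variable $n \mapsto ln$ in \eqref{eqn:F_additive} gives $\mathcal{F}(\pi(0,l)f)(m) = \widehat{f}(lm)$ (here one uses that $l \in \mathbb{Z}_p^*$, so $n \mapsto ln$ is a bijection of $\mathbb{Z}_p$, and that $e^{-2\pi i (ln)m/p} = e^{-2\pi i n(lm)/p}$). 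Similarly $(\pi(k,1)f)(m) = f(m-k)$ yields, by shifting the summation index, $\mathcal{F}(\pi(k,1)f)(m) = e^{-2\pi i km/p}\widehat{f}(m)$ — the familiar modulation--translation duality. Composing these two identities gives $\widehat{\pi}(k,l)\widehat{f}(m) = e^{-2\pi i km/p}\widehat{f}(lm)$, which is the claimed formula once we write $f$ for $\widehat{f}$. The statements about $\mathcal{F}(\mathcal{H}_1)$ and $\mathcal{F}(\mathcal{H}_0)$ then follow immediately: $\mathcal{F}(\mathbf{1}) = p^{1/2}\delta_0$ up to the normalization, so $\mathcal{F}(\mathcal{H}_1) = \mathbb{C}\cdot\delta_0$, and since $\mathcal{F}$ is unitary, $\mathcal{F}(\mathcal{H}_0) = \mathcal{F}(\mathcal{H}_1)^\perp = \delta_0^\perp = \{f : f(0) = 0\}$, which we identify with $\mathbb{C}^{\{1,\ldots,p-1\}}$.

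For part (b), I would simply substitute the formula from (a) into the conjugation action. By definition $\varrho_1(k,l)A = \widehat{\pi}(k,l)\,A\,\widehat{\pi}(k,l)^*$, restricted to the block $\mathbb{C}^{\{1,\ldots,p-1\}\times\{1,\ldots,p-1\}}$, which is legitimate because (a) shows $\mathcal{F}(\mathcal{H}_0)$ is $\widehat{\pi}$-invariant. Since $\widehat{\pi}(k,l)$ acts as the composition of the unitary diagonal multiplier $D_k : f(m) \mapsto e^{-2\pi i km/p}f(m)$ and the (unitary) permutation $P_l : f(m)\mapsto f(lm)$, its adjoint is $P_l^* D_k^* = P_{l^{-1}} \overline{D_k}$, and a matrix-entry computation gives
\[
 (\varrho_1(k,l)A)(m,n) = e^{-2\pi i km/p}\,A(lm,ln)\,\overline{e^{-2\pi i kn/p}} = e^{-2\pi i k(m-n)/p}\,A(lm,ln)~,
\]
as claimed. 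Here one uses $m,n \in \{1,\ldots,p-1\}$ so that $lm, ln$ again range over $\{1,\ldots,p-1\}$.

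I do not expect a genuine obstacle here; this lemma is bookkeeping, and the only points requiring a modicum of care are the index changes modulo $p$ (legitimacy of the substitutions $n\mapsto ln$ and $n \mapsto n+k$ as bijections of $\mathbb{Z}_p$) and keeping track of which phase factor gets conjugated when passing to the adjoint in the conjugation action. The one conceptual step worth stating explicitly is that $\mathcal{F}$ intertwines $\pi$ and $\widehat\pi$ and is unitary, so that the orthogonal decomposition $\mathbb{C}^p = \mathcal{H}_1 \oplus \mathcal{H}_0$ is carried to $\mathbb{C}\delta_0 \oplus \mathbb{C}^{\{1,\ldots,p-1\}}$, which is what makes the restriction in (b) meaningful.
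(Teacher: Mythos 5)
Your proposal is correct and follows essentially the same route as the paper: a direct computation of $\mathcal{F}\pi(k,l)\mathcal{F}^*$ via changes of summation variable for (a), and a straightforward matrix-entry computation of the conjugation action for (b); your factorization into translation/dilation (resp.\ modulation $D_k$ and permutation $P_l$) is only an organizational variant of the paper's one-step calculation. No gaps.
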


\begin{proof}
 We include the computations proving the lemma for completeness. For part (a), we have for all $f \in \mathbb{C}^p$ that 
 \begin{eqnarray*}
  \mathcal{F} (\pi(k,l) f)(m) &  = & \sum_{n=0}^{p-1} f(l^{-1}(n-k)) e^{-2 \pi i n m/p} \\
  & = & \sum_{n=0}^{p-1} f(n) e^{-2 \pi i (ln+k) m/p} \\
  & = & e^{-2 \pi i km/p} \mathcal{F}(f)(lm) ~.
 \end{eqnarray*}
 The statements concerning $\mathcal{H}_0,\mathcal{H}_1$ are obvious. 

 For the proof of (b), we fix $A \in \mathbb{C}^{\{1,\ldots, p-1\} \times \{ 1,\ldots, p-1 \}}$ as well as $f \in \mathbb{C}^p$, and get by (a)
 \begin{eqnarray*}
 (A \cdot \widehat{\pi}(k,l)^* \cdot f)(r) & = & ((A \cdot \pi(-l^{-1}k,l^{-1}) \cdot f)(r)  \\
 & = & \sum_{n=1}^{p-1} A(r,n) f(l^{-1}n) e^{-2 \pi i (-l^{-1}k)n/p} \\
 & = & \sum_{n=1}^{p-1} A(r,ln) f(n) e^{2 \pi i kn/p} ~.
 \end{eqnarray*}
It follows that 
\begin{eqnarray*}
 (\pi(k,l) \cdot A \cdot \pi(k,l)^* \cdot u)  (m)  & = & 
 e^{-2\pi i km/p} \sum_{n=0}^{p-1} A(lm,ln) e^{2 \pi i kn/p} u(n) \\
 & = & \sum_{n=0}^{p-1} e^{-2 \pi i k(m-n)} A(lm,ln) u(n)~,
\end{eqnarray*}
which establishes (b). 
\end{proof}

\begin{remark}
Our explicit matrix recovery algorithm in Corollary \ref{cor:crit_op_ret} makes use of the group Fourier transform of $G$. In the following, we will briefly sketch the fundamentals of this transform, for a \textit{general} finite group $G$, and a concrete choice of \textit{unitary dual} $\widehat{G} = \left( (\sigma,\mathcal{H}_\sigma) \right)_{\sigma \in \widehat{G}}$ of $G$. By definition, the unitary dual is a system of representatives of the unitary equivalence classes of irreducible representations of $G$. Such a system exists for every finite group $G$, and it is necessarily finite.  For ease of notation, we assume $\mathcal{H}_\sigma = \mathbb{C}^{d_\sigma}$, i.e., $\sigma(x) \in \mathbb{C}^{d_\sigma \times d_\sigma}$. 

Given $F \in G^\mathbb{C}$, one can now define its \textit{group Fourier transform} as the matrix-valued tuple 
\[
 \mathcal{F}_G(F) = (\sigma(F))_{\sigma \in \widehat{G}} \in \prod_{\sigma \in \widehat{G}} \mathbb{C}^{d_\sigma \times d_\sigma}
\]
given by 
\begin{equation} \label{eqn:group_fourier}
 \mathcal{F}_G(F) (\sigma) = \sigma(F) = \sum_{g \in G} F(g) \sigma(g)~.  
\end{equation}
Observe that the Fourier transform (\ref{eqn:F_additive}) for the cyclic case is a special case of this construction, since the characters of the additive group $\mathbb{Z}_p$ are precisely given by 
\[
 \chi_k(l) = e^{-2 \pi i lk/p}~.
\] Note however that $\mathcal{F}$ in equation (\ref{eqn:F_additive}) is normalized by the factor $p^{-1/2}$, in order to obtain a unitary map, whereas we refrained from introducing a normalization to the group Fourier transform. 
In order to avoid confusion in the following, we will mostly stick to the notation $\sigma(F)$ instead of $\mathcal{F}_G(F)(\sigma)$, when we refer to the group Fourier transform.

The Fourier transform fulfills the \textit{Plancherel formula} 
\[
\| F \|_2^2 = |G|^{-1} \sum_{\sigma \in \widehat{G}} d_\sigma \| \sigma(F) \|^2
\] where the norm on the right hand side is the Frobenius norm. 
Its inverse is given explicitly by 
\begin{equation} \label{eqn:Fourier_inv}
 \mathcal{F}_G^{-1} (A_\sigma) (g) = |G|^{-1} \sum_{\sigma \in \widehat{G}} d_\sigma {\rm trace}(A_\sigma \sigma(g)^*)~,
\end{equation} for all $(A_\sigma)_{\sigma \in \widehat{G}} \in \prod_{\sigma \in \widehat{G}} \mathbb{C}^{d_\sigma \times d_\sigma}$.
For all these general facts, we refer the reader to \cite[Chapter 15]{MR1695775}. 
\end{remark}

Returning to the special case of the affine group $G = \mathbb{Z}_p \rtimes \mathbb{Z}_p^\ast$, we have the following description of its unitary dual.  
We refer to \cite[Chapter 17]{MR1695775} for a proof. 
\begin{lemma}
 The unitary dual of $G= \mathbb{Z}_p \rtimes \mathbb{Z}_p^\ast$ is given by the following set of representatives: The representation $\widehat{\pi}_0$, as well as the set $\{ \tilde{\chi} : \chi \in \widehat{\mathbb{Z}_p^*} \}$ of characters of the multiplicative group, lifted to the full group $G$ by letting
 \[
\tilde{\chi} (k,l) := \chi(l)~,\chi \in \widehat{\mathbb{Z}_p^*}~.
 \]
\end{lemma}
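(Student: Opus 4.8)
The plan is to prove this by a dimension count, using the identity $\sum_{\sigma \in \widehat{G}} d_\sigma^2 = |G|$ valid for any complete set of representatives of the unitary dual of a finite group (see \cite[Chapter 15]{MR1695775}). Since $|G| = p(p-1)$, it suffices to exhibit a list of pairwise inequivalent irreducible representations of $G$ whose squared dimensions already add up to $p(p-1)$; such a list is automatically complete.

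First I would check that the $p-1$ maps $\tilde{\chi}$, $\chi \in \widehat{\mathbb{Z}_p^*}$, are pairwise inequivalent one-dimensional representations of $G$. Each $\tilde{\chi}$ is a homomorphism $G \to \mathbb{T}$ because the second-coordinate map $(k,l) \mapsto l$ is a group homomorphism $G \to \mathbb{Z}_p^*$; being one-dimensional, these are irreducible, and two of them are equivalent iff they are equal as functions. Evaluating at $(0,l)$ shows that $\tilde{\chi} = \tilde{\chi}'$ forces $\chi = \chi'$, and since $\mathbb{Z}_p^*$ is cyclic of order $p-1$ it has exactly $p-1$ distinct characters. Next, the earlier lemma states that $\pi_0$ is irreducible on $\mathcal{H}_0$, hence so is its unitary conjugate $\widehat{\pi}_0 = \mathcal{F} \pi_0 \mathcal{F}^*$; its dimension is $\dim \mathcal{H}_0 = p-1 \ge 2$ (as $p>2$), so it cannot be equivalent to any of the one-dimensional $\tilde{\chi}$. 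The squared dimensions of the representations in $\{ \widehat{\pi}_0 \} \cup \{ \tilde{\chi} : \chi \in \widehat{\mathbb{Z}_p^*} \}$ therefore sum to $(p-1)^2 + (p-1) \cdot 1^2 = (p-1)p = |G|$, which closes the argument.

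There is essentially no serious obstacle here: the only points requiring care are the injectivity of $\chi \mapsto \tilde{\chi}$ and the fact that $\widehat{\pi}_0$ is irreducible, the latter already being available from the earlier lemma. For completeness one could instead run Mackey's little-group method with the normal subgroup $N = \mathbb{Z}_p$: the dual $\widehat{N}$ decomposes into two $\mathbb{Z}_p^*$-orbits, namely the fixed point $\chi_0$ (with full stabilizer $\mathbb{Z}_p^*$, producing the $p-1$ lifted characters $\tilde{\chi}$) and the single free orbit of nontrivial characters (with trivial stabilizer, producing one induced representation of dimension $[G:N] = p-1$, unitarily equivalent to $\widehat{\pi}_0$). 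This is the route taken in \cite[Chapter 17]{MR1695775}, and it has the advantage of also exhibiting $\widehat{\pi}_0$ explicitly as $\mathrm{Ind}_N^G \chi_1$.
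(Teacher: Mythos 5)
Your argument is correct. The completeness criterion $\sum_{\sigma \in \widehat{G}} d_\sigma^2 = |G|$ does exactly what you claim: the $p-1$ lifted characters $\tilde{\chi}$ are pairwise distinct (hence inequivalent, being one-dimensional) homomorphisms since $(k,l)\mapsto l$ is a homomorphism and evaluation at $(0,l)$ separates them; $\widehat{\pi}_0$ is irreducible of dimension $p-1$ because it is unitarily conjugate (via $\mathcal{F}$) to $\pi_0$, whose irreducibility is recorded in the earlier lemma; and $(p-1)^2 + (p-1) = p(p-1) = |G|$ shows no further irreducible can exist. Note, though, that the paper itself gives no proof of this lemma at all: it simply refers to Chapter 17 of the cited reference, where the dual of the affine group is obtained by the little-group (Mackey) analysis of the normal subgroup $\mathbb{Z}_p$ that you sketch in your final paragraph. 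So your primary route is genuinely different from the source the paper leans on: the dimension count is shorter and self-contained, but it presupposes that $\pi_0$ is irreducible (imported from the earlier lemma) and it only certifies completeness of a list found by inspection, whereas the Mackey approach constructs the dual from scratch and explains structurally why there is exactly one higher-dimensional representation, exhibiting it as $\mathrm{Ind}_N^G\chi_1$. Either way the lemma is established; your write-up would even strengthen the paper slightly by making the claim self-contained.
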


We thus obtain the following group Fourier transform for the affine group $G$: Given $F \in \mathbb{C}^G$, its group Fourier transform is the tuple 
\[
 \mathcal{F}(F) = \left( \tilde{\chi}(F) \right)_{\chi \in \widehat{\mathbb{Z}_p^\ast}} \cup (\widehat{\pi}_0(F)). 
\]
consisting of the scalars
\[
 \tilde{\chi}(F)  = \sum_{\ell \in \mathbb{Z}_p^\ast} \left( \sum_{k \in \mathbb{Z}_p} F(k,l) \right) \chi(l)~,
\] and the matrix
\[
 \widehat{\pi}_0(F) = \sum_{k,l} F(k,l) \widehat{\pi}_0(k,l) \in \mathbb{C}^{\{1,\ldots,p-1 \} \times \{1, \ldots, p-1 \}} 
\]

The next result provides a decomposition of $\varrho_1$ into suitable subrepresentations, which is an important step towards establishing matrix recovery. We remind the reader that all algebraic operations involving numbers $l,k,m,n \in \{0,\ldots,p-1 \}$ are understood modulo $p$. 

\begin{proposition} \label{prop:decompose}
 Define the unitary operator 
 \[
  S :  \mathbb{C}^{\{1,\ldots,p-1 \} \times \{ 1,\ldots, p-1 \}} \to  \mathbb{C}^{\{1,\ldots,p-1 \} \times \{ 1,\ldots, p-1 \}}
 \] by 
 \[
  (SA)(m,n) = \left\{ \begin{array}{ll} A(-m,-m) & \mbox{ for } n=1 \\ A(m (1-n)^{-1}, m n (1-n)^{-1}) & \mbox{ for } n \in \{ 2 ,\ldots, p-1 \} \end{array} \right.  
 \]
 Define $\varrho_2 : G \to \mathcal{U}(  \mathbb{C}^{\{1,\ldots,p-1 \} \times \{ 1,\ldots, p-1 \}} )$ by
 $\varrho_2(k,l) = S \circ \varrho_1(k,l) \circ S^*$. Then 
 \begin{equation} \label{eqn:varrho2}
  (\varrho_2(k,l) A)(m,n) = \left\{ \begin{array}{ll}  A(lm,n) & \mbox{ for } n=1 \\
  e^{-2 \pi i km/p} A(lm,n) &  \mbox{ for } n \in \{ 2 ,\ldots, p-1 \} \end{array} \right. 
 \end{equation}
Define $\mathcal{K}_1 \subset \mathbb{C}^{\{1,\ldots,p-1 \} \times \{ 1,\ldots, p-1 \}}$ as the subspace of matrices $A$ with $A(m,n) = 0$ for all $n \not= 1$, and let $\mathcal{K}_2 = \mathcal{K}_1^\bot \cong 
\mathbb{C}^{\{1,\ldots,p-1 \} \times \{ 2,\ldots, p-1 \}}$. These spaces are invariant under $\varrho_2$, and we have 
\[
 \varrho_2|_{\mathcal{K}_1} \simeq \tilde{\lambda}_{\mathbb{Z}_p^\ast} ~,~ \varrho_2|_{\mathcal{K}_2} \simeq \widehat{\pi}_0 \otimes \mathbf{1}_{\mathbb{C}^{\{ 2,\ldots,p-1 \}}}~. 
\]
where $\tilde{\lambda}_{\mathbb{Z}_p^\ast}$ is the regular representation of  $\mathbb{Z}_p^\ast$, lifted to $G$. 
\end{proposition}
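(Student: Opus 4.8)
The plan is entirely computational: the point is that $S$ is the unitary operator induced by a bijective relabeling of the index set $I := \{1,\ldots,p-1\}^2$, and once that relabeling and its inverse are pinned down, every assertion drops out by substitution.

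\textbf{Step 1 (unitarity of $S$).} Write $(SA)(m,n) = A(\sigma(m,n))$, where $\sigma : I \to I$ is given by $\sigma(m,1) = (-m,-m)$ and $\sigma(m,n) = (m(1-n)^{-1},\, mn(1-n)^{-1})$ for $n \neq 1$. I would check that $\sigma$ restricts to a bijection of the column $\{n=1\}$ onto the diagonal $\{(r,r):r\in\mathbb{Z}_p^\ast\}$ (via $m\mapsto -m$), and, for $n\neq 1$, that $(a,b):=\sigma(m,n)$ always satisfies $a\neq b$ and $a,b\neq 0$ (indeed $a-b = m(1-n)^{-1}(1-n) = m \neq 0$), with inverse $(a,b)\mapsto (a-b,\, ba^{-1})$ on off-diagonal pairs, as a direct substitution confirms. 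A dimension count $(p-1) + (p-1)(p-2) = (p-1)^2$ then shows $\sigma$ is a bijection of $I$; hence $S$ is a permutation unitary, and $\varrho_2 = S\varrho_1(\cdot)S^*$ is a unitary representation of $G$ equivalent to $\varrho_1$. Crucially, this step also records the explicit $\sigma^{-1}$ needed below.

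\textbf{Step 2 (the formula for $\varrho_2$).} Using $(S^*B)(x) = B(\sigma^{-1}(x))$ and Lemma \ref{lem:pi_four}(b), one gets for $(\mu,\nu):=\sigma(m,n)$
\[
 (\varrho_2(k,l)A)(m,n) = e^{-2\pi i k(\mu-\nu)/p}\, A\bigl(\sigma^{-1}(l\mu,l\nu)\bigr)~.
\]
The representation was engineered so that the two ingredients here collapse cleanly: (i) $\mu-\nu = m$ when $n\neq 1$ and $\mu-\nu = 0$ when $n=1$, immediate from the definition of $\sigma$; and (ii) $\sigma^{-1}(l\mu,l\nu) = (lm,n)$ for $n\neq 1$ and $(lm,1)$ for $n=1$, obtained by feeding $(l\mu,l\nu)$ into the inverse formula from Step 1 and simplifying (e.g. for $n\neq 1$, $l\mu - l\nu = lm$ and $(l\nu)(l\mu)^{-1} = n$). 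This yields (\ref{eqn:varrho2}) at once. Invariance of $\mathcal{K}_1$ and of $\mathcal{K}_2 = \mathcal{K}_1^\bot = \{A : A(\cdot,1)\equiv 0\}$ is then read straight off (\ref{eqn:varrho2}), since $\varrho_2(k,l)$ acts on each $n$-slice separately and fixes which slices vanish.

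\textbf{Step 3 (identifying the pieces).} On $\mathcal{K}_1 \cong \mathbb{C}^{\mathbb{Z}_p^\ast}$ via $A\mapsto (A(m,1))_m$, formula (\ref{eqn:varrho2}) reads $(\varrho_2(k,l)v)(m) = v(lm)$; this is independent of $k$, so it factors through the projection $G\to\mathbb{Z}_p^\ast$, and $v\mapsto v(l\,\cdot)$ is the right regular representation of $\mathbb{Z}_p^\ast$, which is unitarily equivalent to $\lambda_{\mathbb{Z}_p^\ast}$ (intertwiner $v\mapsto v(\cdot^{-1})$). Hence $\varrho_2|_{\mathcal{K}_1}\simeq\tilde\lambda_{\mathbb{Z}_p^\ast}$. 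On $\mathcal{K}_2 \cong \mathbb{C}^{\mathbb{Z}_p^\ast}\otimes\mathbb{C}^{\{2,\ldots,p-1\}}$, with the first index corresponding to the first tensor factor, (\ref{eqn:varrho2}) gives $(\varrho_2(k,l)A)(m,n) = e^{-2\pi i km/p}A(lm,n)$; here $n$ is a spectator and each slice transforms exactly as $\widehat{\pi}(k,l)f(m) = e^{-2\pi i km/p}f(lm)$ from Lemma \ref{lem:pi_four}(a), i.e. as $\widehat{\pi}_0$ on $\mathbb{C}^{\{1,\ldots,p-1\}}$. Therefore $\varrho_2|_{\mathcal{K}_2}\simeq \widehat{\pi}_0\otimes\mathbf{1}_{\mathbb{C}^{\{2,\ldots,p-1\}}}$.

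The only real obstacle is the bookkeeping in Steps 1–2: getting $\sigma^{-1}$ correct and verifying the modular-arithmetic identities $\mu-\nu=m$ and $\sigma^{-1}(l\mu,l\nu)=(lm,n)$, while making sure every coordinate stays in $\mathbb{Z}_p^\ast = \mathbb{Z}_p\setminus\{0\}$ throughout (which is exactly why the column $n=1$ is peeled off and treated separately). Once these identities are in hand there is no further conceptual content; Steps 3 is then just inspection of (\ref{eqn:varrho2}).
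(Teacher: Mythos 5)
Your proposal is correct and follows essentially the same route as the paper's own proof: verify that $S$ is a coordinate permutation by exhibiting the explicit inverse $(m,n)\mapsto(-m,1)$ on the diagonal and $(m,n)\mapsto(m-n,m^{-1}n)$ off it, push Lemma \ref{lem:pi_four}(b) through the conjugation to obtain (\ref{eqn:varrho2}), and then read off the invariance of $\mathcal{K}_1,\mathcal{K}_2$ and the identification of the two subrepresentations. Your Step 3 merely spells out what the paper treats as immediate once (\ref{eqn:varrho2}) is established.
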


\begin{proof}
It is clearly enough to prove (\ref{eqn:varrho2}), and we do this by explicit calculation. First, one quickly verifies that the map 
\[
 (m,n) \mapsto \left\{ \begin{array}{ll} (-m,-m) & \mbox{ for } n=1 \\ (m (1-n)^{-1}, m n (1-n)^{-1}) & \mbox{ for } n \in \{ 2 ,\ldots, p-1 \} \end{array} \right.
\] is a bijection on $\{1,\ldots,p-1 \} \times \{ 1,\ldots, p-1 \}$, with inverse given by
\[
 (m,n) \mapsto \left\{  \begin{array}{ll} (-m,1) & \mbox{ for } n=m \\ (m-n, m^{-1} n) & \mbox{ for } m \not= n  \end{array} \right.
\] Hence $S$ is indeed unitary, with 
\[
 (S^*A)(m,n) =  \left\{  \begin{array}{ll} A (-m,1) & \mbox{ for } n=m \\ A (m-n, m^{-1} n) & \mbox{ for } m \not= n  \end{array} \right.
\]
Fixing $A \in \mathbb{C}^{\{1,\ldots,p-1 \} \times \{ 1,\ldots, p-1 \}}$, we let $B_1 = \varrho_1(k,l) S^*(A)$ and 
$B_2 = S B_1 = \varrho_2(k,l) A$. We then get 
\[
 B_1(m,n) = e^{-2 \pi i k(m-n)/p} \left\{  \begin{array}{ll} A (l m,1) & \mbox{ for } n=m \\ A (\ell(m-n), m^{-1} n) & \mbox{ for } m \not= n  \end{array} \right.
\] Accordingly, for $m=1,\ldots,p-1$, 
\[
 B_2(m,1) = B_1(m,m) = A(lm,1)~.
\] For $n \in \{2,\ldots,p-1 \}$ we fix $(m_0,n_0) = (m (1-n)^{-1}, mn(1-n)^{-1})$, and get
\[
 m_0-n_0 = m(1-n)^{-1} - mn(1-n)^{-1} = m (1-n) (1-n)^{-1} = m~,~ m_0^{-1}n_0 = n
\]
which allows to compute
\[
 B_2(m,n) = B_1(m_0,n_0) =  e^{-2 \pi i k(m_0-n_0)/p} A(l (m_0-n_0), m_0^{-1} n_0) = e^{- 2 \pi i km/p}A(lm,n)~,
\] and (\ref{eqn:varrho2}) is established. 
\end{proof}

We next state criteria for matrix recovery.
\begin{theorem} \label{thm:crit_op_ret}
Let $\varphi \in \mathbb{C}^{\{1,\ldots,p-1\}}$. Then $\widehat{\pi}_0(G) \varphi$ does matrix recovery iff $\varphi$ fulfills the following conditions:
\begin{enumerate}
 \item[(i)] All entries of $c_{\varphi} \in \widehat{\mathbb{Z}_p^*}^\mathbb{C}$, defined 
 by
 \[
  c_{\varphi}(\chi) = \sum_{m =1}^{p-1} |\varphi(-l)|^2 \chi(l)
 \] are nonzero. 
 \item[(ii)] The matrix $B_\varphi \in \mathbb{C}^{\{1,\ldots,p-1 \} \times \{ 1,\ldots, p-2 \}}$ defined by
 \[
  B_\varphi(m,n) = \varphi( m n ) \overline{\varphi( m (n+1))}~
 \] has a left inverse $B_{\varphi}^\dagger \in  \mathbb{C}^{\{1,\ldots,p-2 \} \times \{ 1,\ldots, p-1 \}}$.
\end{enumerate}
\end{theorem}

The first and most involved step of the proof (given below) consists in explicitly inverting the linear operator $V_{\varphi \otimes \varphi}^{\varrho_1} : \mathbb{C}^{\{1,\ldots,p-1\} \times \{ 1, \ldots, p-1 \}} \to \mathbb{C}^G$, assuming that $\varphi$ fulfills conditions (i) and (ii). We give a detailed description of this procedure in the following corollary to the proof: 
\begin{corollary} \label{cor:crit_op_ret}
 Assume that $\varphi \in  \mathbb{C}^{\{1,\ldots,p-1\}}$ fulfills conditions (i) and (ii) from the previous theorem. 
An explicit algorithm for the inversion of the linear map $V_{\varphi \otimes \varphi}^{\varrho_1} : C^{\{1,\ldots,p-1\} \times \{ 1, \ldots, p-1 \}} \to \mathbb{C}^G$ is described as follows: Let $\Omega_0 \in \mathbb{C}^{\{1,\ldots,p-1 \} \times \{ 1,\ldots,p-1 \}}$ denote the matrix describing the linear map 
\[
 (\Omega_0 f)(m) = f(-m)~,
\]
and let $\Omega_1 \in \mathbb{C}^{\{1,\ldots, p-2\} \times \{2,\ldots, p-1 \}}$ be the matrix describing the linear map  
\[
 (\Omega_1 f)(n) = f(1+n^{-1})~.
\]

Let $A \in \mathbb{C}^{\{1,\ldots,p-1\} \times \{ 1,\ldots, p-1 \}}$ and let $F \in \mathbb{C}^G$ be defined as
\[
 F(k,l) = \left(  V_{\varphi \otimes \varphi}^{\varrho_1}A \right) (k,l)  = \langle A \widehat{\pi}_0(k,l) \varphi, \widehat{\pi}_0(k,l) \varphi \rangle~.
\]
Then $A$ can be recovered from $F$ by the following steps:
\begin{enumerate}
 \item Define $a_1 \in \mathbb{C}^{ \{ 1,\ldots,p-1 \}}$ by
 \[
  a_1(k) = \frac{1}{p(p-1)} \sum_{\chi \in \widehat{\mathbb{Z}_p^\ast}} c_{\varphi}(\chi)^{-1} \tilde{\chi}(F) \chi(k)~.
 \]
 \item Define $A_2' = p \cdot \widehat{\pi_0}(F) \cdot \Omega_0^T \cdot (B_{\varphi}^\dagger)^\ast \cdot \Omega_1 \in \mathbb{C}^{\{1,\ldots,p-1\} \times \{ 2,\ldots,p-1\}}$.
 \item With the unitary operator $S: \mathbb{C}^{\{1,\ldots,p-1 \} \times \{ 1,\ldots, p-1 \}} \to  \mathbb{C}^{\{1,\ldots,p-1 \} \times \{ 1,\ldots, p-1 \}}$ from Proposition \ref{prop:decompose}, we have 
 \[
  A = S^* \left( (a_1|A_2') \right)
 \]
 \end{enumerate}
 In particular, $f \in \mathbb{C}^{\{1,\ldots,p-1 \}}$ can be recovered up to a phase factor by computing $A$ from  $F = |V_\varphi^{\widehat{\pi_0}} f|^2$ using steps (1)-(3), and then determining $f$ as the eigenvector associated to the nonzero eigenvalue of $A$, normalized by $\|f\|_2^2 = {\rm trace}(A)$. 
\end{corollary}

\begin{proof}
Throughout the proof, the symbol $\mathbf{0}$ will denote zero matrices of varying sizes. Since these sizes can be derived from the context, we will refrain from using more specific notation.
%
%

The intertwining property of $S$ yields 
 \begin{eqnarray*}
  F(k,l) & = & \langle A, \pi(k,l) \varphi \otimes \pi(k,l) \varphi \rangle \\
  & = & \langle SA, \varrho_2(k,l) S(\varphi \otimes \varphi) \rangle~.
 \end{eqnarray*}

We write $SA = A_1 + A_2$ with $A_i \in \mathcal{K}_i$, and get
\[
 A_1 = \left( a_1| \mathbf{0} \right) ~,~ A_2 = \left( \mathbf{0} | A_2' \right)~,
\] with a vector $a_1 \in \mathbb{C}^{\{1,\ldots,p-1 \} }$, $A_2' = \mathbb{C}^{\{1,\ldots,p-1\} \times \{2,\ldots, p-1 \}}$. 

Similarly, we have 
\[ S(\varphi \otimes \varphi) = (c_\varphi|C_\varphi') = \underbrace{(c_\varphi|\mathbf{0})}_{\in \mathcal{K}_1} + \underbrace{(\mathbf{0}|C_\varphi')}_{\in \mathcal{K}_2}
\]
where $(c_\varphi|C'_\varphi) = C_\varphi$ is obtained by plugging
\[
 (\varphi \otimes \varphi)(m,n) = \varphi(m) \overline{\varphi(n)}
\]
into the definition of $S$, which leads to  
\begin{equation} \label{eqn:Cphi}
 C_\varphi(m,n) = \left\{ \begin{array}{ll} |\varphi(-m)|^2 & \mbox{ for } n=1 \\ \varphi(m (1-n)^{-1}) \overline{\varphi(m n (1-n)^{-1})} & \mbox{ for } n \in \{ 2 ,\ldots, p-1 \} \end{array} \right.  
\end{equation}
In particular, 
\[
 c_\varphi(n) = |\varphi(-n)|^2~.
\]  

The overall proof strategy for the inversion formula is to recover both $a_1$ and $A_2'$ from $F$ via group Fourier transform, which allows to reconstruct $SA$. It then only remains to apply $S^*$ to the result.

By definition of the various matrices involved, we have
\[
 F(k,l) = \underbrace{\langle A_1,\varrho_2(k,l) (c_\varphi|\mathbf{0}) \rangle}_{=F_1(k,l)} + 
 \underbrace{\langle A_2, \varrho_2(k,l) (\mathbf{0}|C_\varphi') \rangle}_{=F_2(k,l)}~.
\] 
Now equation (\ref{eqn:varrho2}) yields
\begin{equation} \label{eqn:F1}
 F_1(k,l) = \sum_{m=1}^{p-1} a(m) |\varphi(-lm)|^2~, 
\end{equation} whereas
\begin{equation} \label{eqn:F2}
 F_2(k,l) = \langle A_2, \widehat{\pi}_0(k,l) (\mathbf{0},C_\varphi') \rangle = {\rm trace}\left(A_2 \cdot (\mathbf{0}|C_\varphi')^* \cdot \widehat{\pi}_0(k,l)^* \right)~.
\end{equation}
This reveals $F_1$ as a convolution product over the factor group $\mathbb{Z}_p^\ast$, whereas $F_2$ is seen to be a special instance of the Fourier inversion formula (\ref{eqn:Fourier_inv}). In particular, we have
\begin{equation} \label{eqn:Fi_orthog}
 F_1 \in {\rm span}\{\tilde{\chi} : \chi \in \widehat{\mathbb{Z}_p^*} \}~,~ F_2 ~\bot ~ {\rm span}\{\tilde{\chi} : \chi \in \widehat{\mathbb{Z}_p^*} \}
\end{equation}

We now can use (\ref{eqn:Fi_orthog}) to get for all $\chi \in \widehat{\mathbb{Z}_p^\ast}$ that 
\begin{eqnarray*}
 \tilde{\chi}(F) & = & \tilde{\chi}(F_1) \\ & = & 
 p \sum_{l=1}^{p-1} \sum_{m=1}^{p-1} a(m) |\varphi(-lm)|^2 \chi(l) \\
 & = & p \sum_{l=1}^{p-1} \sum_{m-1}^{p-1} a(m) \overline{\chi}(m) |\varphi(-lm)|^2 \chi(lm) \\
 & = & p \sum_{l=1}^{p-1} \sum_{m-1}^{p-1} a(m) \overline{\chi}(m) |\varphi(-l)|^2 \chi(l) \\
 & = & p \overline{\chi}(a) c_\varphi(\chi)~.
\end{eqnarray*}
By assumption, $c_\varphi(\chi) \not= 0$, which allows to solve this equation for $\overline{\chi}(a)$, and plug the result into the Fourier inversion on $\mathbb{Z}_p^\ast$, yielding 
\begin{eqnarray*}
 a(k) & = & \frac{1}{p-1} \sum_{\chi \in \widehat{\mathbb{Z}_p^\ast}} \overline{\chi}(a) \chi(k) \\
 & = & \frac{1}{p(p-1)} \sum_{\chi \in \widehat{\mathbb{Z}_p^\ast}}  c_\varphi(\chi)^{-1} \tilde{\chi}(F) \chi(k)~,
\end{eqnarray*}
which is the formula from the first step.

For the second step, we recall that (\ref{eqn:F2}) allows to view $F_2$ as an inverse Fourier transform, which together with (\ref{eqn:Fi_orthog}) gives rise to 
\begin{eqnarray} \nonumber
p^{-1} A'_2 \cdot (C_\varphi')^* & = & \frac{|G|}{d_{\widehat{\pi}_0}} \left( (\mathbf{0}|A'_2) \cdot (0|C_\varphi')^* \right) \\
 & = &  \widehat{\pi}_0(F_2)  \nonumber \\
 & = & \widehat{\pi_0}(F) \label{eqn:F_2} 
\end{eqnarray}

We next show  
\begin{equation}
 \label{eqn:Bphi_Cphi}
 B_{\varphi} = \Omega_0 \cdot C_\varphi' \cdot \Omega_1^T~.
\end{equation}

For this purpose we introduce the change of variables $\omega: \{ 1,\ldots, p-2 \} \to \{ 2,\ldots, p-1 \}$, 
$\omega(n) = 1+n^{-1}$. $\omega$ is a well-defined bijection, with inverse map given by $\omega^{-1}(n) = (n-1)^{-1}$. Thus the map $\Omega_1 : f \mapsto f \circ \omega$ is a well-defined unitary map with inverse $\Omega_1^T$.  
We apply this change of variables to the \textit{row vectors} of $C_\varphi'$, and the formulas  
\[
 (\omega(n) -1)^{-1} = n~,~\omega(n) (\omega(n)-1)^{-1} = n+1  
\] show that  
\begin{eqnarray*}
 \left( C_\varphi' \cdot \Omega_1^T \right) (m,n) & = & \varphi(m (1-\omega(n))^{-1}) \overline{\varphi(m \omega(n) (1-\omega(n))^{-1})} \\
 & = & \varphi(-mn) \overline{\varphi(-m(n+1))} ~.
\end{eqnarray*}
But then left multiplication by $\Omega_0$, effecting a sign flip in $m$, yields 
\[
 \left( \Omega_0 \cdot C_\varphi' \cdot \Omega_1^T \right) (m,n) = B_\varphi(m,n)~,
\]
as desired. Hence (\ref{eqn:Bphi_Cphi}) is established, which implies $C_\varphi' = \Omega_0^T \cdot  B_\varphi \cdot \Omega_1$. In combination with  (\ref{eqn:F_2}) this  yields
\begin{eqnarray*}
 p \cdot \widehat{\pi}_0(F) \cdot \Omega_0^T \cdot (B_\varphi^\dagger)^* \cdot \Omega_1 & = & 
 A_2' \cdot (C_\varphi')^* \cdot  \Omega_0^T \cdot (B_\varphi^\dagger)^* \cdot \Omega_1  \\
 & = & A_2' \cdot \left( \Omega_1^T B_\varphi^* \cdot \Omega_0 \right) \cdot \Omega_0^T \cdot B_\varphi^\dagger)^* \cdot \Omega_1 \\
 & = & A_2' \cdot \Omega_1^T \cdot (B_\varphi^\dagger \cdot B_\varphi)^* \cdot \Omega_1 \\
 & = & A_2'~,
\end{eqnarray*}
by choice of $B_\varphi^\dagger$. 

Hence the recovery of $SA$ from $F$ via steps (1) and (2) is shown, and application of $S^*$ yields the desired result. 

This establishes the inversion algorithm, as well as sufficiency of the conditions (i), (ii). It remains to show necessity of the conditions. First assume that condition (i) is violated, i.e. there exists a character $\chi \in \widehat{\mathbb{Z}_p^*}$ such that 
\[
 \sum_{m =1}^{p-1} |\varphi(-l)|^2 \chi(l) = 0 ~.
\]

Fix $a_1 = \chi \in \mathbb{C}^{\{1,\ldots,p-1 \}}$, and let $A = S^*(a_1|\mathbf{0})$, as well as $F = V_{\varphi \otimes \varphi}^{\varrho_1} A$. Using the above notations and calculations, we then have 
\[
 F(k,l) = F_1(k,l) =  \sum_{m=1}^{p-1} a(m) |\varphi(-lm)|^2 =  \overline{\chi}(l) \sum_{m=1}^{p-1} \chi(lm) |\varphi(-lm)|^2 = 0 
\] for all $(k,l) \in G$. On the other hand, $A \not= 0$, since $S^*$ is unitary. 

Assuming condition (ii) to be violated, we obtain the existence of a nonzero vector $v \in \mathbb{C}^{\{ 1,\ldots,p-2 \}}$ such that $B_\varphi v = 0$. We pick an arbitrary nonzero $w \in \mathbb{C}^{\{1,\ldots,p-1\}}$ and define $A_2 = (w \otimes v) \cdot\Omega_1$, and $A = S^*(\mathbf{0}|A_2)$. Then $A$ is nonzero, but the above computations yield for all $(k,l) \in G$:
\begin{eqnarray*}
 F(k,l) & = & F_2(k,l) \\
  & = & {\rm trace}(A_2 \cdot (C_{\varphi}')^\ast \pi(k,l)^*) \\
  & = & {\rm trace}((w \otimes v) \cdot \Omega_1 \cdot \Omega_1^T  \cdot B_\varphi^* \cdot \Omega_0 \cdot \pi(k,l)^*) \\
  & = & {\rm trace}(\underbrace{(w \otimes B_\varphi v)}_{=0} \cdot \Omega_0 \cdot \pi (k,l)^*) \\
  & = & 0
\end{eqnarray*}
by choice of $v$. This concludes the proof. 
\end{proof}

The remaining open question is whether the conditions for matrix recovery established in the last theorem can actually be fulfilled by a suitably chosen vector $\varphi$. The positive answer is contained in the next theorem. 
Note that Lemma \ref{lem:Zariski} then implies that almost every vector does matrix recovery. 

\begin{theorem} \label{thm:ex_or}
 Let $\varphi \in \mathbb{C}^{\{1,\ldots,p-1\}}$ be given by 
 \[
  \varphi(1) = 1~,~\varphi(2) = 2~, 
 \] in the case $p=3$, and by 
 \[
  \varphi(m) = 1 -\delta_1(m)~,
 \] for $p>3$. Then $\widehat{\pi}_0(G) \varphi$ does matrix retrieval. 
\end{theorem}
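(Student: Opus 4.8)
The plan is to verify conditions (i) and (ii) of Theorem~\ref{thm:crit_op_ret} for the two explicit vectors $\varphi$, treating the prime $p=3$ by direct computation and $p>3$ by a clean algebraic argument. For $p=3$ the space $\mathbb{C}^{\{1,2\}}$ is two-dimensional, $\widehat{\mathbb{Z}_3^\ast}$ has two characters, and the matrix $B_\varphi$ lives in $\mathbb{C}^{\{1,2\}\times\{1\}}$, so both conditions reduce to a handful of scalar inequalities: I would just evaluate $c_\varphi(\chi) = |\varphi(-1)|^2\chi(1) + |\varphi(-2)|^2\chi(2) = 4\cdot 1 + 1\cdot\chi(2)$ on the trivial and the sign character of $\mathbb{Z}_3^\ast$, getting $5$ and $3$, both nonzero; and check that $B_\varphi(m,1) = \varphi(m)\overline{\varphi(2m)}$ gives a vector with no zero entry (namely $(\varphi(1)\overline{\varphi(2)}, \varphi(2)\overline{\varphi(1)}) = (2,2)$), hence has full column rank $1$ and a left inverse. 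That settles $p=3$.

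For $p>3$ the generating vector is $\varphi(m) = 1 - \delta_1(m)$, i.e.\ $\varphi(1)=0$ and $\varphi(m)=1$ for $m\neq 1$. Condition (i): $|\varphi(-l)|^2 = 1 - \delta_{p-1}(l)$ (since $-l \equiv 1$ iff $l = p-1$), so $c_\varphi(\chi) = \sum_{l=1}^{p-1}\chi(l) - \chi(p-1) = \sum_{l}\chi(l) - \chi(-1)$. For $\chi$ nontrivial $\sum_l \chi(l) = 0$, so $c_\varphi(\chi) = -\chi(-1) \neq 0$; for $\chi$ trivial $c_\varphi(\chi) = (p-1) - 1 = p-2 \neq 0$ since $p>3$. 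So (i) holds. Condition (ii) is the substantive one: I must show the $(p-1)\times(p-2)$ matrix $B_\varphi(m,n) = \varphi(mn)\overline{\varphi(m(n+1))} = (1-\delta_1(mn))(1-\delta_1(m(n+1)))$ has rank $p-2$, equivalently its columns are linearly independent. Note $B_\varphi(m,n) = 0$ precisely when $mn \equiv 1$ or $m(n+1)\equiv 1$, i.e.\ when $m \equiv n^{-1}$ or $m \equiv (n+1)^{-1}$, and equals $1$ otherwise; so column $n$ is the all-ones vector of length $p-1$ with exactly two zeros, at the distinct positions $m = n^{-1}$ and $m = (n+1)^{-1}$ (distinct because $n \not\equiv n+1$, and both positions lie in $\{1,\dots,p-1\}$ since $n,n+1 \in \{2,\dots,p-1\}$ are units; here $n$ ranges over $\{1,\dots,p-2\}$, i.e.\ $n+1$ over $\{2,\dots,p-1\}$, and one checks $n^{-1}\in\{1,\dots,p-1\}$ always).

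So the columns of $B_\varphi$ are of the form $\mathbf{1} - \delta_{a_n} - \delta_{b_n}$ for pairs $\{a_n,b_n\} = \{n^{-1}, (n+1)^{-1}\}$, $n = 1,\dots,p-2$. I expect the main obstacle to be proving these $p-2$ vectors are linearly independent in $\mathbb{C}^{p-1}$. One approach: suppose $\sum_n \lambda_n(\mathbf{1} - \delta_{a_n} - \delta_{b_n}) = 0$. Writing $\Lambda = \sum_n\lambda_n$, the $m$-th coordinate gives $\Lambda = \sum_{n : m\in\{a_n,b_n\}}\lambda_n$ for every $m$. Now track which pairs $\{a_n,b_n\}$ contain a given $m = j^{-1}$: we need $n^{-1} = j^{-1}$ (so $n=j$, valid if $j \le p-2$) or $(n+1)^{-1} = j^{-1}$ (so $n = j-1$, valid if $j \ge 2$). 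Thus coordinate $m = j^{-1}$ yields $\Lambda = \lambda_{j-1} + \lambda_j$ with the convention that $\lambda_0$ and $\lambda_{p-1}$ are absent (i.e.\ $\Lambda = \lambda_1$ when $j=1$ and $\Lambda = \lambda_{p-2}$ when $j = p-1$). As $j$ ranges over $\{1,\dots,p-1\}$ these are $p-1$ equations $\lambda_{j-1}+\lambda_j = \Lambda$ in the $p-2$ unknowns $\lambda_1,\dots,\lambda_{p-2}$ plus $\Lambda$; the endpoint equations give $\lambda_1 = \lambda_{p-2} = \Lambda$, and the telescoping recursion $\lambda_j = \Lambda - \lambda_{j-1}$ forces $\lambda_j = \Lambda$ for odd $j$ and $\lambda_j = 0$ for even $j$, while consistency at $j=p-1$ combined with $\Lambda = \sum_n\lambda_n$ pins down $\Lambda = 0$ and hence all $\lambda_n = 0$. (I would carry out the parity bookkeeping carefully, distinguishing $p \equiv 1$ and $p\equiv 3 \pmod 4$; this is where a sign or off-by-one error is most likely, so I would double-check with $p=5,7$.) Once $B_\varphi$ has full column rank, $B_\varphi^\dagger = (B_\varphi^* B_\varphi)^{-1}B_\varphi^*$ is a left inverse, condition (ii) holds, and Theorem~\ref{thm:crit_op_ret} gives matrix recovery; Lemma~\ref{lem:Zariski} then yields the genericity remark.
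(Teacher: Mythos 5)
Your proposal is correct, and at the top level it follows the same strategy as the paper: verify conditions (i) and (ii) of Theorem \ref{thm:crit_op_ret}, with the $p=3$ case by direct evaluation and condition (i) for $p>3$ via $\sum_l\chi(l)=0$ for nontrivial $\chi$. The difference lies in how you establish ${\rm rank}(B_\varphi)=p-2$: the paper reindexes the rows by $m\mapsto m^{-1}$ and then performs explicit Gaussian elimination (subtracting consecutive rows, deleting a row, and one further elimination step) to exhibit an invertible upper triangular submatrix, whereas you argue directly that the columns $\mathbf{1}-\delta_{n^{-1}}-\delta_{(n+1)^{-1}}$ are linearly independent by analyzing a vanishing linear combination coordinatewise. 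Your route is arguably cleaner: it avoids the auxiliary matrices $\tilde B_\varphi$, $M_\varphi$, $\dot M_\varphi$ entirely, at the cost of a small combinatorial recursion. Moreover, the parity bookkeeping you were worried about requires no case distinction modulo $4$: reading coordinate $m=j^{-1}$ gives $\lambda_1=\Lambda$, $\lambda_{j-1}+\lambda_j=\Lambda$ for $2\le j\le p-2$, and $\lambda_{p-2}=\Lambda$; the recursion forces $\lambda_j=\Lambda$ for odd $j$ and $\lambda_j=0$ for even $j$, and since $p-2$ is odd the endpoint equation at $j=p-1$ is automatically consistent. The only genuinely binding constraint is then the definition $\Lambda=\sum_n\lambda_n=\tfrac{p-1}{2}\Lambda$, i.e.\ $\tfrac{p-3}{2}\Lambda=0$, which forces $\Lambda=0$ (hence all $\lambda_n=0$) exactly because $p>3$ --- and this is also precisely where the argument fails for $p=3$, consistent with the need for a different generating vector there (for $p=3$ the vector $1-\delta_1$ gives $B_\varphi=0$). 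With that step written out, your proof is complete and fully aligned with the statement.
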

\begin{proof}
In the case $p=3$, the matrix $B_\varphi$ has dimensions $\{1,2 \} \times \{ 1 \}$, and it fulfills the rank condition (ii) iff $\varphi(1) \varphi(2) \not= 0$. Condition (i) is fulfilled as soon as $|\varphi(1)| \not= |\varphi(2)|$, and both conditions are fulfilled by $\varphi(1)=1, \varphi(2) = 2$. 

Now assume $p>3$ and let $\varphi(m) = 1 - \delta_1(m)$. Then property $(i)$ is easy to verify: 
For the constant characer $\chi_0$ of $\widehat{\mathbb{Z}_p^{\ast}}$, we find $\langle |\varphi|^2, \chi_0 \rangle = p-2$. Noting that $|\varphi|^2 = \chi_0 - \delta_1$, we find for all remaining characters $\chi$ that 
\[
 \langle |\varphi|^2, \chi \rangle = \langle -\delta_1, \chi \rangle = - \chi(1) \not= 0~. 
\]

Therefore it remains to verify condition (ii), which is clearly equivalent to the equation ${\rm rank}(B_\varphi) = p-2$.  By definition of $\varphi$ we get $B_{\varphi}(m,n) \in \{0, 1 \}$ for all $m,n$, with 
\begin{eqnarray*}
 B_\varphi(m,n) = 0 & \Longleftrightarrow &  \left( mn=1 \right) \lor  \left( m(n+1)=1  \right) \\
  & \Longleftrightarrow & m \in \{ n^{-1},(n+1)^{-1} \}~.
\end{eqnarray*}
Letting $\tilde{B}_\varphi$ denote the matrix obtained by 
\[
 \tilde{B}_\varphi(m,n) = B_\varphi(m^{-1},n)~,
\] we have ${\rm rank}(\tilde{B}_\varphi) = {\rm rank}(B_\varphi)$, and 
\[
\tilde{B}_{\varphi}(m,n) = 0 \Longleftrightarrow m \in \{n,n+1 \}~,
\] with value $1$ for the remaining entries. 

The next step in the proof consists in successive applications of Gaussian elimination steps to the rows of $\tilde{B}_\varphi$: We first subtract the second row from the first, then the third row from the second,$\ldots$, the $(p-1)$th row from the $(p-2)$th. This yields
\[
 \dot{B}_\varphi(m,n) = \left\{ \begin{array}{cc} -1, & m \ge 2, n=m-1 \\ 1, & m \le p-3, n=m+1 \\ 0 & \mbox{ elsewhere } \end{array} \right. 
\]
Removal of the first row (and an index shift) results in the square matrix $M_\varphi \in \mathbb{C}^{\{1,\ldots,p-2\} \times \{1,\ldots,p-2 \}}$, given by 
\begin{equation} \label{eqn:Bdot_vphi}
 M_\varphi (m,n) = \left\{ \begin{array}{cc} -1, & m \le p-2, n=m \\ 1, & m \le p-4,n=m+2 \\ 1, & m=p-2,n \le p-3 \\ 0, & \mbox{otherwise} \end{array} \right.
\end{equation}
i.e. 
\begin{equation}
 M_\varphi = \left( \begin{array}{rrrrrrrr} -1 & 0 & 1 & & &  &  \\
  & -1 & 0 & 1 &  &  &  &  \\
  &  & \ddots & \ddots  & \ddots & & &  \\
  &   &  & \ddots & \ddots  & \ddots & &  \\
  &   &   &       & -1 & 0 & 1  & \\
  &   &   &       &    &  -1 & 0 & 1  \\
  &  &    &        &   &     & -1 & 0 \\
1 & 1 & \ldots & \ldots & \ldots & \ldots & 1 & 0 
 \end{array} \right) ~.
\end{equation}
Since $M_\varphi$ is a submatrix of $\dot{B}_\varphi$, the steps taken so far ensure that the proof is finished once we have shown that $M_\varphi$ is invertible. For this purpose, we make one final Gaussian elimination step to take care of the last row of $M_\varphi$, by letting
\begin{equation} \label{eqn:def_dotM}
 \dot{M}_\varphi(p-2,n) = M_\varphi(p-2,n) + \sum_{j=1}^{(p-3)/2} j M_\varphi(2j-1,n)+j M_\varphi(2j,n)~,
\end{equation} for $n=1,\ldots, p-2$, and $\dot{M}(k,m) = M(k,m)$ for $k< p-2$. 

For $n=p-2$, we then get
\begin{equation} \label{eqn:dotM_diag}
  \dot{M}_\varphi(p-2,n) = 0 + (p-3)/2 \not= 0 ~.
\end{equation}

Now fix $n< p-2$. In case $n\le 2$, there is only a single contribution on the right-hand side, leading to 
\[
\dot{M}(p-2,n) = M(p-2,n)-1 = 0~. 
\]

For $3 \le n \le p-2$, we distinguish between even and odd cases. In the case where $n=2k$, the only nonzero contributions on the right-hand side of (\ref{eqn:def_dotM}) are provided by $j=k$ and $j=k-1$, leading to 
\[
 \dot{M}_\varphi (p-2,n) = 1 + (k-1) \underbrace{M_\varphi(2k-2,2k)}_{=1} + k \underbrace{M_\varphi(2k,2k)}_{=-1} = 0   
\]

In the case $n=2k+1$, the nonzero contributions also come from $j=k$ and $j=k-1$, with corresponding entries $-1$ resp. $1$, resulting again in $\dot{M}_\varphi(p-2,n) = 0$. 

In summary, we find that $\dot{M}_\varphi$ is upper triangular, with nonvanishing diagonal thanks to (\ref{eqn:Bdot_vphi}) and (\ref{eqn:dotM_diag}), and hence $\dot{M}_\varphi$ is invertible. This concludes the proof. 
\end{proof}

The appearance of the group Fourier transform in the matrix recovery algorithm is no coincidence, but rather a byproduct of the approach taken in this paper. The following remark explains these connections in some more detail, and sketches how Corollary \ref{cor:crit_op_ret} can possibly be generalized to other settings.

\begin{remark} \label{rem:Group_fourier}
Recall from Proposition \ref{prop:mr_conjrep} that matrix recovery for group frames requires finding certain cyclic vectors for the conjugation representation
\[
 \varrho(x)(A) = \pi(x) \cdot A \cdot \pi(x)^*~,
\] acting on $\mathbb{C}^{d_\pi \times d_\pi}$. The natural representation-theoretic approach to this problem relies on the decomposition of $\varrho$ into irreducibles, of the type
\[
 \varrho \simeq \bigoplus_{\sigma \in \widehat{G}} m_\sigma \cdot \sigma ~,
\] where $m_\sigma \in \mathbb{N}_0$ denotes the multiplicity with which $\sigma$ occurs in $\varrho$. 

Slightly rewriting this decomposition allows to introduce the group Fourier transform in a rather natural fashion. As a starting point, we make the observation that any cyclic vector $A$ for $\varrho$ yields an injective operator $V_A^\varrho: C^{d_\pi \times d_\pi} \to \mathbb{C}^G$ intertwining $\varrho$ with the left regular representation $\lambda_G$ acting on $\mathbb{C}^G$. A necessary and sufficient condition for the existence of such an intertwining operator is that $m_\sigma$ is less than or equal to the multiplicity of $\sigma$ in $\lambda_G$, and the latter coincides with $d_\sigma$.

Hence, we have obtained $m_\sigma \le d_\sigma$ for all $\sigma \in \widehat{G}$, as a necessary condition for matrix recovery. In this case, one can in effect realize $m_\sigma \cdot \sigma$ by left action of $\sigma$ on a suitable subspace $\mathcal{K}_\sigma \subset \mathbb{C}^{d_\sigma \times d_\sigma}$, acting by left multiplication. This results in an intertwining operator 
\[ S: \mathbb{C}^{d_\pi \times d_\pi} \to \prod_{\sigma \in \widehat{G}} \mathcal{K}_\sigma \]
such that 
\begin{equation}
 S \varrho(x) S^* (A_\sigma)_{\sigma \in \widehat{G}}  = (\sigma(x) \cdot A_\sigma)_{\sigma \in \widehat{G}}~. 
\end{equation} In this realization, we get for arbitrary $A,B \in \mathbb{C}^{d_\pi \times d_\pi}$ with 
$\widehat{A} = (\widehat{A}_\sigma)_{\sigma \in \widehat{G}} =SA$ and $\widehat{B} =  (\widehat{B}_\sigma)_{\sigma \in \widehat{G}} = SB$, that the matrix coefficient $V_A^\varrho B$ can be computed as 
\begin{eqnarray*}
 V_A \varrho B (x) & = & \langle B, \varrho(x) A \rangle \\
  & = & \sum_{\sigma \in \widehat{G}} \langle \widehat{B}_\sigma, \sigma(x) \widehat{A}_\sigma \rangle \\
  & = & \sum_{\sigma \in \widehat{G}} {\rm trace}(\widehat{B}_\sigma \cdot \widehat{A}_\sigma^* \cdot \sigma(x)^*)~, 
\end{eqnarray*}
which shows that the coefficient transform is very closely related to the Fourier inversion formula (\ref{eqn:Fourier_inv}). As a consequence, we can make the following observations, using similar reasoning as in the proof of Theorem \ref{thm:crit_op_ret}: 
\begin{enumerate}
\item[(a)] Invertibility of $V_A^\varrho$ is equivalent to injectivity of $B_\sigma \mapsto B_\sigma \cdot A_\sigma^*$ on each $\mathcal{K}_\sigma$.
\item[(b)] Recovery of $B$ from $F = V^\varrho_A B$ can be carried out as follows:  (i) Compute $\widehat{B}_\sigma \cdot \widehat{A}_\sigma^* = |G| d_\sigma^{-1} \sigma(F)$; (ii) for each $\sigma$, recover $\widehat{B}_\sigma \in \mathcal{K}_\sigma$ from  the result of the previous step (as guaranteed by the condition in (a)); and (iii) invert $S$. 
\item[(c)] The additional challenge posed by matrix retrieval from measurements of the type 
\[
 \langle A \pi(x) \psi, \pi(x) \psi \rangle 
\]
shows itself in applying the criteria and inversion methods from (a) and (b) to specific matrices $A = \psi \otimes \psi$, which depends quadratically on the entries in $\psi$. Note that the explicit formulation of these criteria also hinges on the concrete choice of the intertwining operator $S$. 
\end{enumerate}

We emphasize that these observations apply to any finite group, and they provide a unified representation-theoretic perspective on matrix recovery via group frames. Remark \ref{rem:mr_Heisenberg} below explains how the well-understood case of Heisenberg groups fits in this framework.  Note that this point of view is quite similar to the treatment of admissible vectors (i.e., special continuous group frames arising from unitary actions of locally compact groups, and their relation to the Plancherel formula of such groups), as developed in \cite{MR2130226}. 

The criteria and inversion method in Theorem \ref{thm:crit_op_ret} are closely related to the general program sketched in (a)-(c) above, but they do not constitute a faithful implementation. Proposition \ref{prop:decompose} decomposes the representation space into two orthogonal subspaces, $\mathcal{K}_1$ and $\mathcal{K}_2$. On $\mathcal{K}_1$, $\varrho_2$ is essentially the regular representation of the quotient group $\mathbb{Z}_p^\ast$, whereas the action on $\mathcal{K}_2$ is a multiple of the irreducible representation $\widehat{\pi}_0$. 

Now the decomposition of the regular representation of $\mathbb{Z}_p^\ast$ is effected by its characters, which can also be considered as (lifted) characters of the whole group $G$. Thus the Fourier coefficients $c_\varphi$ occurring in condition (i) from Theorem \ref{thm:crit_op_ret} and in Step (1) of the recovery algorithm could also be expressed in terms of the group Fourier transform of $G$, which would increase the similarity to steps (a) and (b). 

In the case of condition (ii) and recovery step (2) from Theorem \ref{thm:crit_op_ret}, the group Fourier transform is more explicitly employed, in a way that corresponds quite closely to steps (a) and (b) above. The main difference is that we opted to formulate the injectivity condition (ii), which corresponds to (a), in terms of the matrix $B_\varphi$. This matrix has the advantage that its dependence on the vector $\varphi$ is rather transparent. This simplifies verification of condition (ii), as can be seen in the proof of Theorem \ref{thm:ex_or}. The price to pay is the occurrence of the auxiliary matrices $\Omega_0$ and $\Omega_1$ in the inversion step (2).  
\end{remark}

\begin{remark} \label{rem:mr_Heisenberg}
In this remark we study phase retrieval and matrix recovery in the context of finite Gabor systems. 
The following is largely based on observations made in \cite{MR3500231}. The main purpose of this remark, and the slightly novel contribution to the discussion of matrix recovery for the Schr\"odinger representation, is the explicit reference to the program outlined in Remark \ref{rem:Group_fourier}.

 Fix $n \in \mathbb{N}$, $n \ge 2$, and define the associated finite Heisenberg group by
 \[
  \mathbb{H} = \mathbb{Z}_n^3
 \] with group law
 \[
  (k,l,m)\cdot(k',l',m') = (k+k',l+l',m+m'-l'k)~.
 \] The center of this group is given by $Z(\mathbb{H}) = \{ 0 \} \times \{ 0 \} \times \mathbb{Z}_n$, and $\mathbb{H}/Z(\mathbb{H}) \cong \mathbb{Z}_n \times \mathbb{Z}_n$. 
 
 The \textbf{Schr\"odinger representation} of $\mathbb{H}_n$ acts on $\mathbb{C}^n$ via
 \[
  (\pi(k,l,m)f)(y) = e^{2 \pi im/n} e^{2 \pi i ly/n} f(y-k)~,k,l,m,y \in \{0,\ldots,n-1 \}~,
 \] with $y-k$ understood modulo $n$.  The Schr\"odinger representation is irreducible, and the restriction of $\pi$ to the center $Z(\mathbb{H})$ is a character. Since we are interested in matrix retrieval, we will systematically omit the central variable in the following: All scalar actions cancel out in the conjugation action, which means that $Z(\mathbb{H})$ is necessarily in the kernel of $\varrho$. In particular, we will write $\pi(k,l):= \pi(k,l,0)$.
 
 In order to establish criteria for matrix recovery, we make the following two observations:
 \begin{equation} \label{eqn:pi_onb}
  \forall k,l,k',l' ~:~{\rm trace}(\pi(k,l) \pi(k',l')^*) = n \delta_{k,k'} \delta_{l,l'}~,
 \end{equation}
which means that (for dimension reasons) the matrices $(n^{-1/2} \pi(k,l))_{(k,l) \in \mathbb{Z}_n^2}$ constitute an orthonormal basis of $\mathbb{C}^{n \times n}$. The second observation is 
\begin{equation} \label{eqn:pi_ev}
\forall k,l,k',l'~:~ \pi(k,l) \pi(k',l') \pi(k,l)^* = e^{-2 \pi i(kl'+k'l) } \pi(k',l') ~.
\end{equation}
We are interested in the conjugation action $\varrho$, and we therefore immediately rewrite (\ref{eqn:pi_ev}) to obtain
\begin{equation} \label{eqn:rho_ev}
\forall k,l,k',l'~:~ \varrho(k,l) \left( \pi(k',l') \right) = e^{-2 \pi (l'k+lk')/n} \pi(k',l') ~.
\end{equation}
Taking (\ref{eqn:pi_onb}), (\ref{eqn:rho_ev}) together, we obtain the unitary equivalence
\[
 \varrho \simeq \bigoplus_{(k',l') \in \mathbb{Z}_n^2} \chi_{k,l}~,
\] with pairwise distinct homomorphisms $\chi_{k',l'} : \mathbb{H} \to \mathbb{T}$ defined by 
\[
 \chi_{k',l'} (k,l,m) =  e^{-2 \pi (l'k+lk')/n}~.
\]
Hence we have decomposed $\varrho$ into irreducible and pairwise inequivalent representations of $\mathbb{H}$, with 
the intertwining operator $S: \mathbb{C}^{n \times n} \to \mathbb{C}^{n \times n}$ explicitly given by 
\[
 SA = n^{-1/2} (\langle A, \pi(k,l) \rangle)_{(k,l)  \in \mathbb{Z}_n^2}~.
\] 
For rank-one operators of the form $A = \varphi \otimes \varphi$ this specializes to 
\[
 SA (k,l) = n^{-1/2} \langle \varphi \otimes \varphi, \pi(k,l) \rangle = n^{-1/2} \langle \varphi, \pi(k,l) \varphi \rangle~,
\] which is the so-called \textbf{ambiguity function} $A_\varphi$ of $\varphi$. 

In order to see how this function enters into the computation of $F = V_{\varphi \otimes \varphi}^\varrho A$, for an arbitrary matrix $A$, we use the intertwining property of $S$ to compute 
\begin{eqnarray*}
 F(k,l) & = & \sum_{(k',l')} n^{-1} \langle A, \pi(k',l') \rangle \overline{ \langle \varphi, \pi(k',l') \varphi \rangle} \overline{\chi_{k',l'}(k,l)}~. 
\end{eqnarray*}
Since $(\chi_{k',l'})_{k',l'}$ is just the set of characters of the abelian group $\mathbb{Z}_n^2$, this formula for $F$ is recognized as a Fourier transform over $\mathbb{Z}_n^2$. Fourier inversion then yields
 \begin{equation} 
 \langle A, \pi(k',l') \rangle \overline{ \langle \varphi, \pi(k',l') \varphi \rangle} = n^{-1} \sum_{(k,l)} F(k,l) \chi_{k',l'}(k,l) ~. 
\end{equation}
This can be solved for the expansion coefficients $SA = (n^{-1/2} \langle A, \pi(k',l') \rangle)_{k',l'}$ if (and only if) $A_\varphi$ is nowhere vanishing. The remaining step is then to invert $S$.

These observations are summarized as follows:  
\begin{enumerate}
 \item[(a)] $\varphi$ does matrix recovery iff the ambiguity function of $\varphi$ is nowhere vanishing. 
 \item[(b)] Assume that $\varphi$ fulfills the condition from part (a). Then an arbitrary matrix $A \in \mathbb{C}^{n \times n}$ can be recovered from $F: \mathbb{Z}_n^2 \to \mathbb{C}$, 
 \[
  F(k,l) = \langle A, \varrho(k,l) (\varphi \otimes \varphi) \rangle 
 \] 
 by the following steps: 
 \begin{enumerate}
  \item[(i)] Compute $\widehat{F}(k',l') = \sum_{(k,l)} F(k,l) \chi_{k',l'}(k,l)$.
  \item[(ii)] $A$ is given by
  \[
  A = \sum_{(k',l')} n^{-2} \frac{\widehat{F}(k,l)}{\langle \pi(k,l) \varphi, \varphi  \rangle} \pi(k',l')~.
  \]
 \end{enumerate}
\end{enumerate}
We do not claim originality for these observations. The fact that a nonvanishing ambiguity function guarantees phase retrieval for the short-time Fourier transform is well-known.
Sufficiency of criterion (a) for matrix recovery can be found in \cite[Theorem 2.2]{MR4094471}, whereas necessity is implicit in \cite[Theorem 2.3]{MR4094471}. We are not aware of a previous formulation of the recovery procedure from Part (b). However, it is implicit in the proof of  \cite[Theorem 3.16]{MR4094471} (although only for $A = f \otimes f$). 

The parallels between items (a) and (b) in the previous paragraph and the corresponding steps in the program outlined in Remark \ref{rem:Group_fourier} should now become obvious. Possibly the biggest obstacle to the proper understanding of this correspondence comes from the fact that the representation $\varrho$ is trivial on the center of $\mathbb{H}$, i.e. it can be viewed as a representation of the abelian quotient group $\mathbb{H}/\mathbb{Z}(\mathbb{H})$. As a consequence, the decomposition of $\varrho$ only involves characters, and all irreducible subspaces are one-dimensional, which greatly simplifies both the criterion for matrix recovery, and the inversion formula. At the same time, these simplifications somewhat obscure the role of the group Fourier transform of the (nonabelian) Heisenberg group itself in steps (a) and (b). 

In fact, it can be argued with some justification that this role can be safely ignored, since all computations can be carried out over the abelian factor group. However, the slightly more involved point of view developed here allows to appreciate the wider representation-theoretic context, and a more direct comparison to the case of affine groups. 
\end{remark}

It is worthwhile to explicitly compute the generating vector for the unitarily equivalent permutation representation $\pi_0$, using Fourier inversion.
\begin{corollary} \label{cor:mr_pi0}
 Let $p>2$ be a prime, and define $\psi \in \mathbb{C}^{\{0,\ldots,p-1\}}$ by  
 \[
  \psi(k) = e^{2 \pi i k/3}+2e^{4 \pi i k/3}
 \] in the case $p=3$, and 
 \[
  \psi(k) = \delta_0(k) - p^{-1} - p^{-1} e^{2 \pi i k/p}~
 \] for $p \ge 5$. 
Then $\pi_0(G) \psi$ does matrix recovery. 
\end{corollary}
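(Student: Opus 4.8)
The plan is to deduce the corollary from Theorem \ref{thm:ex_or} by transporting the matrix recovery property along the unitary intertwiner $\mathcal{F}$ between $\pi_0$ and $\widehat{\pi}_0$. First I would record two elementary stability facts. If $U : \mathcal{H}_\pi \to \mathcal{H}_\sigma$ is unitary and $\sigma = U \pi U^*$, then for every $x \in G$ one has $\sigma(x)(U\psi) \otimes \sigma(x)(U\psi) = U\big(\pi(x)\psi \otimes \pi(x)\psi\big)U^*$; since $A \mapsto U A U^*$ is a linear bijection of matrix space, $\mathrm{span}\{\sigma(x)(U\psi)\otimes\sigma(x)(U\psi):x\in G\}$ is full iff $\mathrm{span}\{\pi(x)\psi\otimes\pi(x)\psi:x\in G\}$ is, so $U\psi$ does matrix recovery for $\sigma$ iff $\psi$ does for $\pi$. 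Second, matrix recovery is unchanged if $\psi$ is replaced by $c\psi$ with $c \in \mathbb{C}\setminus\{0\}$, because $\pi(x)(c\psi)\otimes\pi(x)(c\psi) = |c|^2\,\pi(x)\psi\otimes\pi(x)\psi$.

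Next I would invoke Lemma \ref{lem:pi_four}(a): $\mathcal{F}$ restricts to a unitary $\mathcal{H}_0 \to \{0\}\times\mathbb{C}^{\{1,\ldots,p-1\}} \cong \mathbb{C}^{\{1,\ldots,p-1\}}$ intertwining $\pi_0$ with $\widehat{\pi}_0$. Letting $\varphi \in \mathbb{C}^{\{1,\ldots,p-1\}}$ be the vector of Theorem \ref{thm:ex_or}, which does matrix recovery for $\widehat{\pi}_0$, and $\widetilde{\varphi} \in \mathbb{C}^{\{0,\ldots,p-1\}}$ its extension with $\widetilde{\varphi}(0) = 0$, the two stability facts give that $\mathcal{F}^*\widetilde{\varphi} \in \mathcal{H}_0$, as well as any nonzero multiple of it, does matrix recovery for $\pi_0$. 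It then only remains to compute $\mathcal{F}^*\widetilde{\varphi}$ and match it, up to scaling, with the $\psi$ in the statement.

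This last step is a one-line Fourier inversion using the identity $\sum_{m=0}^{p-1} e^{2\pi i km/p} = p\,\delta_0(k)$. For $p \geq 5$ we have $\widetilde{\varphi} = \mathbf{1} - \delta_0 - \delta_1$, so, evaluating $\mathcal{F}^*\widetilde{\varphi}(k) = p^{-1/2}\sum_{m=0}^{p-1}\widetilde{\varphi}(m)\,e^{2\pi i km/p}$,
\[
 \mathcal{F}^*\widetilde{\varphi}(k) = p^{-1/2}\big(p\,\delta_0(k) - 1 - e^{2\pi i k/p}\big) = p^{1/2}\big(\delta_0(k) - p^{-1} - p^{-1} e^{2\pi i k/p}\big),
\]
which is $p^{1/2}$ times the claimed vector; for $p = 3$, $\widetilde{\varphi} = \delta_1 + 2\delta_2$ gives $\mathcal{F}^*\widetilde{\varphi}(k) = 3^{-1/2}\big(e^{2\pi i k/3} + 2 e^{4\pi i k/3}\big)$, again a scalar multiple of the stated $\psi$. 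One also checks directly that $\psi$ has zero sum, consistent with $\psi\in\mathcal{H}_0$, and the corollary follows.

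I do not anticipate a genuine obstacle: the entire substance sits in Theorem \ref{thm:ex_or}, and what is left is the bookkeeping around the equivalence $\pi_0 \simeq \widehat{\pi}_0$ together with a geometric-sum evaluation of $\mathcal{F}^*$. The only points needing a bit of care are the identification $\mathcal{F}(\mathcal{H}_0)\cong\mathbb{C}^{\{1,\ldots,p-1\}}$ — so $\varphi$ must be padded with a zero in the $0$-th slot before applying $\mathcal{F}^*$ — and the normalization constants, which are irrelevant thanks to the scale-invariance of matrix recovery.
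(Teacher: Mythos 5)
Your proposal is correct and follows exactly the route the paper intends: transferring matrix recovery from $\widehat{\pi}_0$ to $\pi_0$ via the unitary intertwiner $\mathcal{F}$ of Lemma \ref{lem:pi_four}, padding the vector $\varphi$ of Theorem \ref{thm:ex_or} with a zero in the $0$-th slot, and computing $\mathcal{F}^*$ by a geometric sum (the corollary's $\psi$ agrees with your result up to an irrelevant nonzero scalar). Your explicit verification of the two stability facts (invariance under unitary equivalence and under rescaling of the generating vector) is a welcome bit of bookkeeping that the paper leaves implicit.
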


\begin{remark} \label{rem:Pauli_pairs}
 Condition (i) from Theorem \ref{thm:crit_op_ret} is equivalent to requiring that all \textit{diagonal} operators $A \in \mathbb{C}^{\{1,\ldots,p-1 \} \times \{ 1,\ldots, p-1 \}}$ can be recovered from their scalar products with $\varrho_1(k,l) (\varphi \otimes \varphi)$.

This property provides an interesting interpretation of the phase retrieval problem for the permutation  representation $\pi_0$, acting on $\mathcal{H}_0 = \mathbf{1}^\bot$, by the observation that the diagonal operators in Fourier domain are just the convolution operators in time domain. 
 
 Hence condition (i) imposed on $\varphi = \widehat{\psi}$ is equivalent to the fact that all convolution operators $A$ on $\mathcal{H}_0$ can be recovered from their scalar products $\langle A \pi(k,l) \psi, \pi(k,l) \psi \rangle, (k,l) \in G$.

  Now assume to be given vectors $f,g \in \mathcal{H}_0$ with $|V^{\pi_0}_\psi f| = |V^{\pi_0}_\psi g|$.
  Using that $V^{\pi_0}_\psi f (k,l) = f \ast \psi_l$, with $\psi_l(m) = \overline{\psi(-l^{-1} m)}$, and the analogous formula for $V^{\pi_0}_\psi g$, together with the Plancherel formula and convolution theorem for $\mathbb{Z}_p$, allow to compute for all $l \in \mathbb{Z}_p^\ast$ that 
  \begin{eqnarray*}
 \sum_{m=1}^{p-1} |\widehat{f}(m)|^2 |\widehat{\psi}(-lm)|^2 & = &   p^{-1/2} \sum_{k=0}^{p-1} |V_\psi^{\pi_0} f(k,l)|^2 \\
  & = &  p^{-1/2} \sum_{k=0}^{p-1} |V_\psi^{\pi_0} g(k,l)|^2 \\
  & = & \sum_{m=1}^{p-1} |\widehat{g}(m)|^2 |\widehat{\psi}(-lm)|^2~.
  \end{eqnarray*}
Both ends of this equation are convolution products over the multiplicative group. Now condition (i) from Theorem \ref{thm:crit_op_ret} implies that convolution with $|\widehat{\psi}|^2$ is injective; compare the argument following equation (\ref{eqn:Fi_orthog}). This implies
\[
\forall m \in \mathbb{Z}_p ~:~ |\widehat{f}(m)| = |\widehat{g}(m)|~.
\]

With this equality established, the convolution theorem yields
\[
 \left|\left( f \ast \psi_l \right)^\wedge \right| = p^{1/2} |\widehat{f}| \cdot |\widehat{\psi}_l| = \left| \left( g \ast \psi_l \right)^\wedge \right|
\]
This means that if we compare the fixed lines $F_l = V_\psi^{\pi_0} f(\cdot, l), G_l = V_\psi^{\pi_0} g (\cdot, l)$, the assumption $|V_\psi^{\pi_0} f| = |V_\psi^{\pi_0} g|$ is equivalent to the condition
\[
 \forall l=1,\ldots,p-1: |F_l|=|G_l| \mbox{ and } \left| \widehat{F}_l \right| = \left| \widehat{G}_l \right|~.
\]
I.e., we get a family of so-called {\em Pauli pairs}. The study of Pauli pairs and Pauli uniqueness, i.e. the question whether $f$ can be recovered from $|f|$ and $|\widehat{f}|$ is a phase retrieval problem in its own right, with origins in quantum physics; see e.g. \cite{MR1700086,MR4094471}. Note that the whole family $(F_l,G_l)_{l =1,\ldots,p-1}$ of Pauli pairs is generated from a single choice of $f,g$ and $\psi$. 
\end{remark}

\begin{remark} \label{rem:pt_fourier}
Corollary \ref{cor:mr_pi0} allows to derive an intriguing observation regarding phase retrieval for finite Fourier transforms.  Given $l \in \{0,1,\ldots,p-1 \}$, let $\chi_l(m) = e^{2 \pi i l m} \in \mathbb{C}^p$.  Define for each $l \in \{1,\ldots,p-1 \}$ the projection operator $P_l$ onto $\chi_l^\bot \subset \mathcal{H}_0$. In other words, $P_l$ is computed from $f$ by omitting the $l$th frequency from the Fourier series of $f$. 
Then using  
\[ \psi(k) = \delta_0(k) - p^{-1} - p^{-1} e^{2 \pi i k/p}~,~\widehat{\psi} = p^{-1/2}\left(\mathbf{1}-\delta_0 - \delta_1 \right)  \]
and $\psi_l(k) = \overline{\psi(-l^{-1}k)}$ allows to write
\[
 \left( f \ast \psi_l \right)^\wedge = p^{1/2} \widehat{f} \cdot \widehat{\psi}_l = \widehat{f} \cdot \left( \mathbf{1} - \delta_0 - \delta_{l^{-1}} \right)~,
\]
whence 
\[
 V_\psi^{\pi_0} f (\cdot, l) = f \ast \psi_l = P_{l^{-1}}(f)~. 
\]

Thus the fact that $\psi$ does phase retrieval is equivalent to the following fact concerning Fourier phase retrieval:

\textit{For any pair $f,g \in \mathcal{H}_0$: $|P_l f| = |P_l g|$ holds for all $l \in \mathbb{Z}_p^{\ast}$ iff $f \in \mathbb{T} \cdot g$.}

We are not aware of a previous source making this observation, nor of a more elementary proof than via Theorem \ref{thm:crit_op_ret}. It is also unclear to us whether this observation holds for the Fourier transform over general cyclic groups, or only over those of prime order. 
\end{remark}

%
%
%

\section{Phase retrieval for $3$-fold transitive permutation actions}

Recall that as a slight reformulation of the results from the previous section, we have that $S(3)$, acting on $\mathcal{H}_0$, does phase retrieval. We will now use this observation to prove that every $3$-fold transitive subgroup $\Gamma \subset S(n)$ does phase retrieval, when acting on $\mathcal{H}_0$.

We start by noting an auxiliary result which requires additional notation. Given $A \subset \{ 0,\ldots,n-1 \}$, we denote 
\[
 \mathcal{H}_A = \{ g \in \mathcal{H}_0 : {\rm supp}(g) \subset A \}~,
\] and let $Q_A: \mathcal{H}_0  \to \mathcal{H}_A$ denote the orthogonal projection. 
The following is a phase propagation result, in a spirit similar to results in \cite{cheng2020stable}.
\begin{lemma} \label{lem:pr_projections}
 Let $n \ge 3$, and assume that $f,g \in \mathcal{H}_0$ are such that for all three-element subsets $A \subset \{0,\ldots,n-1\}$ there exists $\alpha_A \in \mathbb{T}$ with $Q_A g = \alpha_A Q_A f$. Then there exists $\alpha \in \mathbb{T}$ with $g = \alpha f$. 
\end{lemma}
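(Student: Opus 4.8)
The plan is to propagate the phase factor $\alpha_A$ from one three-element set to another by exploiting overlaps. First I would fix a reference three-element set $A_0 = \{i,j,k\}$ on which $f$ does not vanish identically (such a set exists because $f \in \mathcal{H}_0$ is nonzero and has at least two nonzero coordinates; if $f$ has exactly two nonzero coordinates one has to be slightly careful, so I would first dispose of that degenerate case separately — there $g$ must have the same support and the claim is elementary). Set $\alpha = \alpha_{A_0}$ and replace $g$ by $\alpha^{-1} g$, so that $Q_{A_0} g = Q_{A_0} f$. The goal is then to show $g = f$ outright.

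The key mechanism is the following: suppose $A$ and $B$ are three-element sets with $|A \cap B| = 2$, say $A \cap B = \{r,s\}$, and suppose we already know $Q_A g = Q_A f$ with $Q_A f \neq 0$. On $B$ we have $Q_B g = \alpha_B Q_B f$ for some $\alpha_B \in \mathbb{T}$. The subtlety is that a priori $Q_A f$ and $Q_B f$ encode the coordinates of $f$ only \emph{after} subtracting the mean over the respective three-element set, so they are not literally restrictions of $f$; I would therefore work directly with differences of coordinates. Observe that for any $g \in \mathcal{H}_0$ and any $a,b$ in a three-element set $A$, the quantity $g(a) - g(b)$ is recoverable from $Q_A g$ (the projection only kills the constant, not the differences). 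Hence $Q_A g = Q_A f$ forces $g(a) - g(b) = f(a) - f(b)$ for all $a,b \in A$, and $Q_B g = \alpha_B Q_B f$ forces $g(a) - g(b) = \alpha_B(f(a)-f(b))$ for all $a,b \in B$. Applying both to the pair $\{r,s\} = A \cap B$ gives $f(r) - f(s) = \alpha_B (f(r) - f(s))$. If $f(r) \neq f(s)$ this yields $\alpha_B = 1$, hence $Q_B g = Q_B f$. Thus the phase-$1$ normalization propagates along any chain of three-element sets overlapping in two points, \emph{provided} at each step the overlap pair carries a genuine coordinate difference of $f$.

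Carrying this out requires a combinatorial argument that we can reach every coordinate of $f$ by such a chain starting from $A_0$, always crossing on a "good" pair. I would argue as follows. Since $g = f$ on $A_0$ in the sense of coordinate differences, and since $A_0$ was chosen with $Q_{A_0} f \neq 0$, at least one pair in $A_0$ is good. For any index $m \notin A_0$, pick a good pair $\{u,v\} \subset A_0$ and form $B = \{u,v,m\}$; the argument above gives $g(u) - g(v) = f(u) - f(v)$ and also $g(u) - g(m) = f(u) - f(m)$, hence $g(m) - c = f(m) - c$ for the common value $c := g(u) - f(u)$. Since this holds for every $m$ and also on $A_0$, we get $g = f + c\mathbf{1}$; but $g, f \in \mathcal{H}_0$ forces $c = 0$, so $g = f$. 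Undoing the normalization gives $g = \alpha f$ with the original $\alpha$.

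The main obstacle I anticipate is the bookkeeping around "good pairs" and the degenerate cases: namely that $Q_A f$ being nonzero does not by itself tell us \emph{which} pair in $A$ has $f(a) \neq f(b)$, and in principle $f$ could be constant on large subsets, so one must make sure a good pair is always available when building the bridging set $B = \{u,v,m\}$. The clean way around this is to choose $u,v \in A_0$ with $f(u) \neq f(v)$ once and for all (possible as soon as $f$ is not constant on $A_0$, which we can arrange in the choice of $A_0$ unless $f$ has support of size $\le 2$, handled separately), and to note that the value $c = g(u) - f(u)$ computed through this fixed good pair is the same regardless of $m$. That reduces the whole propagation to a single good pair plus one bridging set per remaining index, sidestepping any need for connectivity of an auxiliary graph. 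The only real case analysis left is the small-support degenerate case, which is direct.
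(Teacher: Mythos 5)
Your proof is correct, and it takes a genuinely different route from the paper's. The paper argues by induction on the number of points: after disposing directly of the case ${\rm supp}(f)=\{l,l'\}$, it applies the induction hypothesis to the two co-one-point sets $B$ and $B'$ obtained by deleting $l$ resp.\ $l'$, gets $Q_B g=\alpha_B Q_B f$ and $Q_{B'}g=\alpha_{B'}Q_{B'}f$, matches the two phases via $Q_{B\cap B'}f\neq 0$, and concludes because $\mathcal{H}_B+\mathcal{H}_{B'}=\mathcal{H}_0$. You avoid induction entirely: the observation that the differences $h(a)-h(b)$, $a,b\in A$, are invariants of $Q_A h$ turns each hypothesis $Q_A g=\alpha_A Q_A f$ into $g(a)-g(b)=\alpha_A\bigl(f(a)-f(b)\bigr)$, and a single fixed pair $u,v$ with $f(u)\neq f(v)$ forces $\alpha_B=1$ on every bridging triple $\{u,v,m\}$ after normalizing $\alpha_{A_0}=1$; this yields $g=f+c\mathbf{1}$, and the zero-sum condition kills $c$. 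Your version is shorter and more elementary, and it isolates exactly where membership in $\mathcal{H}_0$ (rather than $\mathbb{C}^n$) is used, namely to remove the additive constant; the paper's induction stays entirely in the language of projections and is closer in spirit to the ``phase propagation'' results it cites. Two small tidy-ups: the relevant condition on $A_0$ is that $f$ be non-constant on it (equivalently $Q_{A_0}f\neq 0$), not merely non-vanishing, which your final choice of $u,v$ with $f(u)\neq f(v)$ already ensures; and since any nonzero zero-sum $f$ admits such a pair even when its support has exactly two elements, your separate degenerate case is unnecessary -- the only case you should state explicitly is the trivial one $f=0$, where $Q_A g=0$ for all $A$ forces all differences of $g$ to vanish, hence $g=0=\alpha f$.
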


\begin{proof}
 We proceed by induction, first noting that the case $n=3$ is obvious. For the induction step, assume the desired statement has been established for $\mathbb{C}^{\{0,\ldots,n-1 \}}$, and let $f,g \in \mathcal{H}_0 \subset \mathbb{C}^{\{ 0,\ldots,n \}}$ be given, with  $Q_A g = \alpha_A Q_A f$ for all 3-element subsets $A \subset \{ 0,\ldots,n \}$. 
 
 W.l.o.g. $f$ is nonzero, and since the union of all $\mathcal{H}_A$ corresponding to three-element subsets spans $\mathcal{H}_0$, this implies that $g$ is nonzero, too.
 Furthermore, $f\in \mathcal{H}_0$ implies that the support of $f$ contains at least two elements $l,l'$.
 
 In the case where ${\rm supp}(f) = \{ l,l' \}$, the fact that $Q_A (g) = \alpha_A Q_A(f)$ holds for all 3-element sets of the type $\{ l, l',k \}$ implies $g(k) = 0$ for all $k \not\in \{l,l' \}$, and thus $g = \alpha f$ follows. 
 
 We can therefore assume for the remainder of the proof that ${\rm supp}(f)$ contains at least three elements,
 and we let $B = \{ 0,\ldots,n \} \setminus \{ l \}$ and $B' = \{ 0,\ldots,n \} \setminus \{ l' \}$. 
 
 Now the induction hypothesis, applied with $\mathcal{H}_{B}, \mathcal{H}_{B'}$ and all three-element subsets $A \subset B$ and $A' \subset B'$, respectively, provide us with $\alpha_B,\alpha_{B'}\in \mathbb{T}$ such that 
 \[
  Q_B g = \alpha_B Q_B f ~,~ Q_{B'} g = \alpha_{B'} Q_{B'} f~. 
 \]
 The fact that $\{ l,l' \} \subsetneq {\rm supp}(f)$ implies that $Q_{B} Q_{B'} f = Q_{B' \cap B} f \not= 0$. 
 But then we get on the one hand 
 \[
  Q_{B'} Q_B g = \alpha_B Q_{B'} Q_{B} f =  \alpha_B Q_{B'\cap B} f 
 \] and on the other
 \[
  Q_{B'} Q_B g = Q_B Q_{B'} g = \alpha_{B'} Q_{B' \cap B} f~, 
 \] leading to 
 \[
  \alpha_{B'}  Q_{B' \cap B} f = \alpha_B Q_{B' \cap B} f~,
 \] with nonzero $ Q_{B' \cap B} f$. This finally entails $\alpha_{B'} = \alpha_B=:\alpha$. Now the fact that $\mathcal{H}_{B} \cup \mathcal{H}_{B'}$ spans all of $\mathcal{H}_0$ implies
 \[
  g = \alpha f~. 
 \]
\end{proof}

\begin{theorem}
 Let $\Gamma < S(3)$ denote a $3$-fold transitive permutation group. Pick $\psi_0 \in \mathbb{C}^{\{0,1,2 \}}$ doing phase retrieval for $\mathcal{H}_0$ under the action of $S(3)$, and define $\psi \in \mathbb{C}^{\{0,\ldots,n-1 \}}$ as trivial extension of $\psi_0$. Then $\Pi(\Gamma) \psi$ does phase retrieval on $\mathcal{H}_0$.  
\end{theorem}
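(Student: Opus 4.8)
The plan is to localize the phase retrieval problem on $\mathcal{H}_0 \subset \mathbb{C}^{\{0,\ldots,n-1\}}$ to the three-element coordinate blocks — where the hypothesis on $\psi_0$ is precisely what is needed — and then to reassemble the resulting local phases into one global phase by invoking Lemma~\ref{lem:pr_projections}. Throughout I would also write $\Pi$ for the quasiregular representation of $S(3)$ on $\mathbb{C}^{\{0,1,2\}}$, with zero-sum subspace $\mathcal{H}_0^{(3)}$; the assumption on $\psi_0$ then reads: the orbit $\Pi(S(3))\psi_0$ does phase retrieval on $\mathcal{H}_0^{(3)}$. I would start by recording two elementary facts. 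Since $\psi_0$ has zero sum, so does its trivial extension $\psi$, hence $\psi \in \mathcal{H}_0$, and by $\Pi$-invariance of $\mathcal{H}_0$ the whole orbit $\Pi(\Gamma)\psi$ lies in $\mathcal{H}_0$, which is what makes the claim meaningful. Secondly, for each $h \in \Gamma$ the vector $\Pi(h)\psi$ is supported on the three-element set $A := h(\{0,1,2\})$ and lies in $\mathcal{H}_A$; hence $\langle f, \Pi(h)\psi\rangle = \langle Q_A f, \Pi(h)\psi\rangle$ for every $f \in \mathcal{H}_0$, with $Q_A$ the orthogonal projection of Lemma~\ref{lem:pr_projections}.

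Next, fixing a three-element subset $A$, I would, for each bijection $\iota : \{0,1,2\}\to A$, introduce the isometric embedding $T_\iota : \mathbb{C}^{\{0,1,2\}}\to\mathbb{C}^{\{0,\ldots,n-1\}}$ with range $\mathbb{C}^A$; it restricts to a unitary $\mathcal{H}_0^{(3)}\to\mathcal{H}_A$. A short computation gives $\Pi(h)\psi = T_{h|_{\{0,1,2\}}}\psi_0$ whenever $h(\{0,1,2\}) = A$, together with the compatibility $T_{\iota\circ\sigma} = T_\iota\circ\Pi(\sigma)$ for $\sigma\in S(3)$. By $3$-fold transitivity of $\Gamma$, the restrictions $h|_{\{0,1,2\}}$ range over all bijections $\{0,1,2\}\to A$ as $h$ runs over $\{h\in\Gamma : h(\{0,1,2\})=A\}$; fixing one such bijection $\iota_A$, this yields
\[
 \bigl\{\Pi(h)\psi : h\in\Gamma,\ h(\{0,1,2\})=A\bigr\} \;=\; T_{\iota_A}\bigl(\Pi(S(3))\psi_0\bigr).
\]

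The key identity to check is then that, when $h|_{\{0,1,2\}}=\iota_A\circ\sigma$, one has $\langle f,\Pi(h)\psi\rangle = \langle T_{\iota_A}^{*} Q_A f, \Pi(\sigma)\psi_0\rangle$; this follows by writing $\Pi(h)\psi = T_{\iota_A}\Pi(\sigma)\psi_0$, using $\Pi(\sigma)\psi_0\in\mathcal{H}_0^{(3)}$, and noting that $T_{\iota_A}^{*} Q_A f$ is exactly the orthogonal projection onto $\mathcal{H}_0^{(3)}$ of the relabelled restriction $i\mapsto f(\iota_A(i))$ of $f$. Consequently, restricted to the block indexed by $A$, the assumption $|V_\psi^\Pi f|=|V_\psi^\Pi g|$ says precisely that $|\langle T_{\iota_A}^{*} Q_A f,\Pi(\sigma)\psi_0\rangle| = |\langle T_{\iota_A}^{*} Q_A g,\Pi(\sigma)\psi_0\rangle|$ for all $\sigma\in S(3)$, so phase retrieval of $\Pi(S(3))\psi_0$ on $\mathcal{H}_0^{(3)}$ produces $\alpha_A\in\mathbb{T}$ with $T_{\iota_A}^{*} Q_A g = \alpha_A T_{\iota_A}^{*} Q_A f$, i.e. $Q_A g = \alpha_A Q_A f$.

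Since $A$ was arbitrary, the vectors $f,g\in\mathcal{H}_0$ satisfy the hypotheses of Lemma~\ref{lem:pr_projections}, which produces $\alpha\in\mathbb{T}$ with $g=\alpha f$ — the asserted phase retrieval property. The only mildly delicate point I anticipate is the bookkeeping around the embeddings $T_\iota$: keeping the various zero-sum constraints straight and verifying that post-composing the naive coordinate restriction with the projection onto $\mathcal{H}_0^{(3)}$ reproduces $T_{\iota_A}^{*} Q_A$. This is routine linear algebra; the genuinely substantive step — propagating local phases to a single global phase — has already been isolated in Lemma~\ref{lem:pr_projections}, so no serious obstacle is expected. (The degenerate case $n=3$ forces $\Gamma=S(3)$, $\psi=\psi_0$, and the statement reduces to its hypothesis.)
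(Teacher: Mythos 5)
Your proposal is correct and follows essentially the same route as the paper: localize to each three-element block $A$, use $3$-fold transitivity to identify the block measurements with the full $S(3)$-frame of $\psi_0$ (after relabelling), deduce $Q_A g = \alpha_A Q_A f$ for every such $A$, and conclude via Lemma~\ref{lem:pr_projections}. Your explicit zero-sum assumption on $\psi_0$ is also implicitly used in the paper's proof (its block systems are taken inside $\mathcal{H}_A \subset \mathcal{H}_0$), so your more careful bookkeeping with the embeddings $T_\iota$ is just a detailed spelling-out of the same argument.
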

\begin{proof}
Fix any three-element subset $A \subset \{0,\ldots, n-1 \}$, and consider the set 
\[
 \{ \Pi(h) \psi ~:~ h \in \Gamma, \Pi(h) \psi \in \mathcal{H}_A \}~.  
\] By a suitable reindexing of $A$ by $\{0,1,2 \}$, this set corresponds to the system $\pi(S(3)) \psi_0$ which was chosen to do phase retrieval on the subspace $\mathbb{C}^{\{ 0,1,2 \}}$ perpendicular to the constant signal. It follows for all $f,g \in \mathcal{H}_0$ satisfying $|V_\psi f| = |V_\psi g|$, and for all three-element subsets $A \subset \{0,1,\ldots,n-1 \}$, that $Q_A g = \alpha_A Q_A f$ holds, with $\alpha_A \in \mathbb{T}$. Hence the assumptions of the previous lemma are fulfilled, and $f = \alpha g$ follows.  
\end{proof}
%
%

\bibliographystyle{abbrv}
\bibliography{pr_affine}

\begin{thebibliography}{10}

\bibitem{alaifari2020phase}
R.~Alaifari, F.~Bartolucci, and M.~Wellershoff.
\newblock Phase retrieval of bandlimited functions for the wavelet transform,
  2020.

\bibitem{MR2549940}
R.~Balan, B.~G. Bodmann, P.~G. Casazza, and D.~Edidin.
\newblock Painless reconstruction from magnitudes of frame coefficients.
\newblock {\em J. Fourier Anal. Appl.}, 15(4):488--501, 2009.

\bibitem{MR2224902}
R.~Balan, P.~Casazza, and D.~Edidin.
\newblock On signal reconstruction without phase.
\newblock {\em Appl. Comput. Harmon. Anal.}, 20(3):345--356, 2006.

\bibitem{MR3202304}
A.~S. Bandeira, J.~Cahill, D.~G. Mixon, and A.~A. Nelson.
\newblock Saving phase: injectivity and stability for phase retrieval.
\newblock {\em Appl. Comput. Harmon. Anal.}, 37(1):106--125, 2014.

\bibitem{MR3500231}
I.~Bojarovska and A.~Flinth.
\newblock Phase retrieval from {G}abor measurements.
\newblock {\em J. Fourier Anal. Appl.}, 22(3):542--567, 2016.

\bibitem{MR3032952}
E.~J. Cand\`es, Y.~C. Eldar, T.~Strohmer, and V.~Voroninski.
\newblock Phase retrieval via matrix completion.
\newblock {\em SIAM J. Imaging Sci.}, 6(1):199--225, 2013.

\bibitem{cheng2020stable}
C.~Cheng, I.~Daubechies, N.~Dym, and J.~Lu.
\newblock Stable phase retrieval from locally stable and conditionally
  connected measurements, 2020.

\bibitem{MR3907853}
C.~Cheng and D.~Han.
\newblock On twisted group frames.
\newblock {\em Linear Algebra Appl.}, 569:285--310, 2019.

\bibitem{MR3303679}
A.~Conca, D.~Edidin, M.~Hering, and C.~Vinzant.
\newblock An algebraic characterization of injectivity in phase retrieval.
\newblock {\em Appl. Comput. Harmon. Anal.}, 38(2):346--356, 2015.

\bibitem{MR3175089}
Y.~C. Eldar and S.~Mendelson.
\newblock Phase retrieval: stability and recovery guarantees.
\newblock {\em Appl. Comput. Harmon. Anal.}, 36(3):473--494, 2014.

\bibitem{MR4029744}
L.~Evans and C.-K. Lai.
\newblock Conjugate phase retrieval on {$\mathbb{C}^M$} by real vectors.
\newblock {\em Linear Algebra Appl.}, 587:45--69, 2020.

\bibitem{MR4189292}
A.~Fannjiang and T.~Strohmer.
\newblock The numerics of phase retrieval.
\newblock {\em Acta Numer.}, 29:125--228, 2020.

\bibitem{MR1310638}
K.~Flornes, A.~Grossmann, M.~Holschneider, and B.~Torr\'{e}sani.
\newblock Wavelets on discrete fields.
\newblock {\em Appl. Comput. Harmon. Anal.}, 1(2):137--146, 1994.

\bibitem{MR2130226}
H.~F\"{u}hr.
\newblock {\em Abstract harmonic analysis of continuous wavelet transforms},
  volume 1863 of {\em Lecture Notes in Mathematics}.
\newblock Springer-Verlag, Berlin, 2005.

\bibitem{FuehrOussaNilpotent}
H.~F\"uhr and V.~Oussa.
\newblock Phase retrieval for nilpotent groups.
\newblock 2022.

\bibitem{MR4094471}
P.~Grohs, S.~Koppensteiner, and M.~Rathmair.
\newblock Phase retrieval: uniqueness and stability.
\newblock {\em SIAM Rev.}, 62(2):301--350, 2020.

\bibitem{MR1700086}
P.~Jaming.
\newblock Phase retrieval techniques for radar ambiguity problems.
\newblock {\em J. Fourier Anal. Appl.}, 5(4):309--329, 1999.

\bibitem{MR3574562}
R.~Kueng, H.~Rauhut, and U.~Terstiege.
\newblock Low rank matrix recovery from rank one measurements.
\newblock {\em Appl. Comput. Harmon. Anal.}, 42(1):88--116, 2017.

\bibitem{MR3901918}
L.~Li, T.~Juste, J.~Brennan, C.~Cheng, and D.~Han.
\newblock Phase retrievable projective representation frames for finite abelian
  groups.
\newblock {\em J. Fourier Anal. Appl.}, 25(1):86--100, 2019.

\bibitem{MR4121508}
R.~D. Malikiosis and V.~Oussa.
\newblock Full spark frames in the orbit of a representation.
\newblock {\em Appl. Comput. Harmon. Anal.}, 49(3):791--814, 2020.

\bibitem{MR3421917}
S.~Mallat and I.~Waldspurger.
\newblock Phase retrieval for the {C}auchy wavelet transform.
\newblock {\em J. Fourier Anal. Appl.}, 21(6):1251--1309, 2015.

\bibitem{MR115648}
J.~Schwinger.
\newblock Unitary operator bases.
\newblock {\em Proc. Nat. Acad. Sci. U.S.A.}, 46:570--579, 1960.

\bibitem{MR1695775}
A.~Terras.
\newblock {\em Fourier analysis on finite groups and applications}, volume~43
  of {\em London Mathematical Society Student Texts}.
\newblock Cambridge University Press, Cambridge, 1999.

\bibitem{MR2964010}
S.~Waldron.
\newblock Group frames.
\newblock In {\em Finite frames}, Appl. Numer. Harmon. Anal., pages 171--191.
  Birkh\"{a}user/Springer, New York, 2013.

\bibitem{MR1003014}
W.~K. Wootters and B.~D. Fields.
\newblock Optimal state-determination by mutually unbiased measurements.
\newblock {\em Ann. Physics}, 191(2):363--381, 1989.

\end{thebibliography}

\end{document}